\newcommand{\nc}{\newcommand}
\newcommand{\delete}[1]{}
\nc{\mlabel}[1]{\label{#1}}  
\nc{\mcite}[1]{\cite{#1}}  
\nc{\mref}[1]{\ref{#1}}  
\nc{\meqref}[1]{\eqref{#1}}  
\nc{\mbibitem}[1]{\bibitem{#1}} 
\nc{\mlabel}[1]{\label{#1}  
{\hfill \hspace{1cm}{\small\tt{{\ }\hfill(#1)}}}}
\nc{\mcite}[1]{\cite{#1}{\small{\tt{{\ }(#1)}}}}  
\nc{\mref}[1]{\ref{#1}{{\tt{{\ }(#1)}}}}  
\nc{\meqref}[1]{\eqref{#1}{{\tt{{\ }(#1)}}}}  
\nc{\mbibitem}[1]{\bibitem[\bf #1]{#1}} 
\newtheorem{theorem}{Theorem}[section]
\newtheorem{lemma}[theorem]{Lemma}
\theoremstyle{definition}
\newtheorem{defn}[theorem]{Definition}
\newtheorem{prop-def}{Proposition-Definition}[section]
\newtheorem{remark}[theorem]{Remark}
\newtheorem{tempex}[theorem]{Example}
\newtheorem{tempexs}[theorem]{Examples}
\newtheorem{temprmk}[theorem]{Remark}
\newtheorem{tempexer}{Exercise}[section]
\newenvironment{exam}{\begin{tempex}\rm}{\end{tempex}}
\nc{\vsa}{\vspace{-.1cm}} \nc{\vsb}{\vspace{-.2cm}}
\nc{\vsc}{\vspace{-.3cm}} \nc{\vsd}{\vspace{-.4cm}}
\nc{\vse}{\vspace{-.5cm}}
\nc{\Irr}{\mathrm{Irr}}
\nc{\ncrbw}{\calr}  
\nc{\NS}{U_{NS}}
\nc{\FN}{F_{\mathrm Nij}}
\nc{\dfgen}{V} \nc{\dfrel}{R}
\nc{\dfgenb}{\vec{v}} \nc{\dfrelb}{\vec{r}}
\nc{\dfgene}{v} \nc{\dfrele}{r}
\nc{\dfop}{\odot}
\nc{\dfoa}{\dfop^{(1)}} \nc{\dfob}{\dfop^{(2)}}
\nc{\dfoc}{\dfop^{(3)}} \nc{\dfod}{\dfop^{(4)}}
\nc{\mapm}[1]{\frakM{(#1)}}\nc{\maps}[1]{\frakS{(#1)}}
\nc{\cmapm}[1]{\frakC(#1)}
\nc{\red}{\mathrm{Red}}
\nc{\cm}{C}
\nc{\supp}{\mathrm{Supp}}
\nc{\lex}{\mathrm{lex}}
\nc{\disp}[1]{\displaystyle{#1}}
\nc{\bin}[2]{ (_{\stackrel{\scs{#1}}{\scs{#2}}})}  
\nc{\binc}[2]{ \left (\!\! \begin{array}{c} \scs{#1}\\
    \scs{#2} \end{array}\!\! \right )}  
\nc{\bincc}[2]{  \left ( {\scs{#1} \atop
    \vspace{-.5cm}\scs{#2}} \right )}  
\nc{\sarray}[2]{\begin{array}{c}#1 \vspace{.1cm}\\ \hline
    \vspace{-.35cm} \\ #2 \end{array}}
\nc{\bs}{\bar{S}} \nc{\ep}{\epsilon}
\nc{\dbigcup}{\stackrel{\bullet}{\bigcup}}
\nc{\la}{\longrightarrow} \nc{\cprod}{\ast} \nc{\rar}{\rightarrow}
\nc{\dar}{\downarrow} \nc{\labeq}[1]{\stackrel{#1}{=}}
\nc{\dap}[1]{\downarrow \rlap{$\scriptstyle{#1}$}}
\nc{\uap}[1]{\uparrow \rlap{$\scriptstyle{#1}$}}
\nc{\defeq}{\stackrel{\rm def}{=}} \nc{\dis}[1]{\displaystyle{#1}}
\nc{\dotcup}{\ \displaystyle{\bigcup^\bullet}\ }
\nc{\sdotcup}{\tiny{ \displaystyle{\bigcup^\bullet}\ }}
\nc{\fe}{\'{e}}
\nc{\hcm}{\ \hat{,}\ } \nc{\hcirc}{\hat{\circ}}
\nc{\hts}{\hat{\shpr}} \nc{\lts}{\stackrel{\leftarrow}{\shpr}}
\nc{\denshpr}{\den{\shpr}}
\nc{\rts}{\stackrel{\rightarrow}{\shpr}} \nc{\lleft}{[}
\nc{\lright}{]} \nc{\uni}[1]{\tilde{#1}} \nc{\free}[1]{\bar{#1}}
\nc{\freea}[1]{\tilde{#1}} \nc{\freev}[1]{\hat{#1}}
\nc{\dt}[1]{\hat{#1}}
\nc{\wor}[1]{\check{#1}}
\nc{\intg}[1]{F_C(#1)}
\nc{\den}[1]{\check{#1}} \nc{\lrpa}{\wr} \nc{\mprod}{\pm}
\nc{\dprod}{\ast_P} \nc{\curlyl}{\left \{ \begin{array}{c} {} \\
{} \end{array}
    \right .  \!\!\!\!\!\!\!}
\nc{\curlyr}{ \!\!\!\!\!\!\!
    \left . \begin{array}{c} {} \\ {} \end{array}
    \right \} }
\nc{\longmid}{\left | \begin{array}{c} {} \\ {} \end{array}
    \right . \!\!\!\!\!\!\!}
\nc{\lin}{\call} \nc{\ot}{\otimes}
\nc{\ora}[1]{\stackrel{#1}{\rar}}
\nc{\ola}[1]{\stackrel{#1}{\la}}
\nc{\scs}[1]{\scriptstyle{#1}} \nc{\mrm}[1]{{\rm #1}}
\nc{\margin}[1]{\marginpar{\rm #1}}   
\nc{\dirlim}{\displaystyle{\lim_{\longrightarrow}}\,}
\nc{\invlim}{\displaystyle{\lim_{\longleftarrow}}\,}
\nc{\mvp}{\vspace{0.5cm}}
\nc{\mult}{m}       
\nc{\svp}{\vspace{2cm}} \nc{\vp}{\vspace{8cm}}
\nc{\proofbegin}{\noindent{\bf Proof: }}
\nc{\proofend}{$\blacksquare$ \vspace{0.5cm}}
\nc{\sha}{{\mbox{\cyr X}}}  
\nc{\ncsha}{{\mbox{\cyr X}^{\mathrm NC}}}
\newfont{\scyr}{wncyr10 scaled 550}
\nc{\ssha}{\mbox{\bf \scyr X}}
\nc{\ncshao}{{\mbox{\cyr X}^{\mathrm NC,\,0}}}
\nc{\shpr}{\diamond}    
\nc{\shprc}{\shpr_c}
\nc{\shpro}{\diamond^0}    
\nc{\shpru}{\check{\diamond}} \nc{\spr}{\cdot}
\nc{\catpr}{\diamond_l} \nc{\rcatpr}{\diamond_r}
\nc{\lapr}{\diamond_a} \nc{\lepr}{\diamond_e} \nc{\sprod}{\bullet}
\nc{\un}{u}                 
\nc{\vep}{\varepsilon} \nc{\labs}{\mid\!} \nc{\rabs}{\!\mid}
\nc{\hsha}{\widehat{\sha}} \nc{\psha}{\sha^{+}} \nc{\tsha}{\tilde{\sha}}
\nc{\lsha}{\stackrel{\leftarrow}{\sha}}
\nc{\rsha}{\stackrel{\rightarrow}{\sha}} \nc{\lc}{\lfloor}
\nc{\rc}{\rfloor} \nc{\sqmon}[1]{\langle #1\rangle}
\nc{\altx}{\Lambda} \nc{\vecT}{\vec{T}} \nc{\piword}{{\mathfrak P}}
\nc{\lbar}[1]{\overline{#1}}
\nc{\dep}{\mathrm{dep}}
\nc{\mmbox}[1]{\mbox{\ #1\ }}
\nc{\ayb}{\mrm{AYB}} \nc{\mayb}{\mrm{mAYB}} \nc{\cyb}{\mrm{cyb}}
\nc{\ann}{\mrm{ann}} \nc{\Aut}{\mrm{Aut}} \nc{\cabqr}{\mrm{CABQR
}} \nc{\can}{\mrm{can}} \nc{\colim}{\mrm{colim}}
\nc{\Cont}{\mrm{Cont}} \nc{\rchar}{\mrm{char}}
\nc{\cok}{\mrm{coker}} \nc{\dtf}{{R-{\rm tf}}} \nc{\dtor}{{R-{\rm
tor}}}
\nc{\Div}{{\mrm Div}} \nc{\End}{\mrm{End}} \nc{\Ext}{\mrm{Ext}}
\nc{\FG}{\mrm{FG}} \nc{\Fil}{\mrm{Fil}} \nc{\Frob}{\mrm{Frob}}
\nc{\Gal}{\mrm{Gal}} \nc{\GL}{\mrm{GL}} \nc{\Hom}{\mrm{Hom}}
\nc{\hsr}{\mrm{H}} \nc{\hpol}{\mrm{HP}} \nc{\id}{\mrm{id}} \nc{\Id}{\mathrm{Id}}  \nc{\ID}{\mathrm{ID}}
\nc{\im}{\mrm{im}} \nc{\incl}{\mrm{incl}} \nc{\Loday}{\mrm{ABQR}\
} \nc{\length}{\mrm{length}} \nc{\LR}{\mrm{LR}} \nc{\mchar}{\rm
char} \nc{\pmchar}{\partial\mchar} \nc{\map}{\mrm{Map}}
\nc{\MS}{\mrm{MS}} \nc{\OS}{\mrm{OS}} \nc{\NC}{\mrm{NC}}
\nc{\rba}{\rm{Rota-Baxter algebra}\xspace}
\nc{\rbas}{\rm{Rota-Baxter algebras}\xspace}
\nc{\rbw}{\ncrbw}
\nc{\rbws}{\rm{RBWs}\xspace}
\nc{\rbadj}{\rm{RB}\xspace}
\nc{\mpart}{\mrm{part}} \nc{\ql}{{\QQ_\ell}} \nc{\qp}{{\QQ_p}}
\nc{\rank}{\mrm{rank}} \nc{\rcot}{\mrm{cot}} \nc{\rdef}{\mrm{def}}
\nc{\rdiv}{{\rm div}} \nc{\rtf}{{\rm tf}} \nc{\rtor}{{\rm tor}}
\nc{\res}{\mrm{res}} \nc{\SL}{\mrm{SL}} \nc{\Spec}{\mrm{Spec}}
\nc{\tor}{\mrm{tor}} \nc{\Tr}{\mrm{Tr}}
\nc{\mtr}{\mrm{tr}}
\nc{\ab}{\mathbf{Ab}} \nc{\Alg}{\mathbf{Alg}}
\nc{\Bax}{\mathbf{CRB}} \nc{\Algo}{\mathbf{Alg}^0}
\nc{\cRB}{\mathbf{CRB}} \nc{\cRBo}{\mathbf{CRB}^0}
\nc{\RBo}{\mathbf{RB}^0} \nc{\BRB}{\mathbf{RB}}
\nc{\Dend}{\mathbf{DD}} \nc{\bfk}{{\bf k}} \nc{\bfone}{{\bf 1}}
\nc{\base}[1]{{a_{#1}}} \nc{\Cat}{\mathbf{Cat}}
 \nc{\DN}{\mathbf{DN}}
\nc{\NA}{\mathbf{NA}}
\nc{\SDN}{\mathbf{SDN}}
\nc{\Diff}{\mathbf{Diff}} \nc{\gap}{\marginpar{\bf
Incomplete}\noindent{\bf Incomplete!!}
    \svp}
\nc{\FMod}{\mathbf{FMod}} \nc{\Int}{\mathbf{Int}}
\nc{\Mon}{\mathbf{Mon}}
\nc{\RB}{\mathbf{RB}} \nc{\remarks}{\noindent{\bf Remarks: }}
\nc{\Rep}{\mathbf{Rep}} \nc{\Rings}{\mathbf{Rings}}
\nc{\Sets}{\mathbf{Sets}} \nc{\bfx}{\mathbf{x}}
\nc{\BA}{{\Bbb A}} \nc{\CC}{{\Bbb C}} \nc{\DD}{{\Bbb D}}
\nc{\EE}{{\Bbb E}} \nc{\FF}{{\Bbb F}} \nc{\GG}{{\Bbb G}}
\nc{\HH}{{\Bbb H}} \nc{\LL}{{\Bbb L}} \nc{\NN}{{\Bbb N}}
\nc{\QQ}{{\Bbb Q}} \nc{\RR}{{\Bbb R}} \nc{\TT}{{\Bbb T}}
\nc{\VV}{{\Bbb V}} \nc{\ZZ}{{\Bbb Z}}
\nc{\cala}{{\mathcal A}} \nc{\calb}{{\mathcal B}}
\nc{\calc}{{\mathcal C}}
\nc{\cald}{{\mathcal D}} \nc{\cale}{{\mathcal E}}
\nc{\calf}{{\mathcal F}} \nc{\calg}{{\mathcal G}}
\nc{\calh}{{\mathcal H}} \nc{\cali}{{\mathcal I}}
\nc{\calj}{{\mathcal J}} \nc{\call}{{\mathcal L}}
\nc{\calm}{{\mathcal M}} \nc{\caln}{{\mathcal N}}
\nc{\calo}{{\mathcal O}} \nc{\calp}{{\mathcal P}}
\nc{\calr}{{\mathcal R}} \nc{\cals}{{\mathcal S}} \nc{\calt}{{\mathcal T}}
\nc{\calw}{{\mathcal W}} \nc{\calx}{{\mathcal X}} \nc{\caly}{{\mathcal Y}} \nc{\calz}{{\mathcal Z}}
\nc{\CA}{\mathcal{A}}
\nc{\frakA}{{\mathfrak A}}
\nc{\fraka}{{\mathfrak a}}
\nc{\frakB}{{\mathfrak B}}
\nc{\frakb}{{\mathfrak b}}
\nc{\frakC}{{\mathfrak C}}
\nc{\frakd}{{\mathfrak d}}
\nc{\frakF}{{\mathfrak F}}
\nc{\frakg}{{\mathfrak g}}
\nc{\frakm}{{\mathfrak m}}
\nc{\frakM}{{\mathfrak M}}
\nc{\frakMo}{{\mathfrak M}^0}
\nc{\frakP}{{\mathfrak P}}
\nc{\frakp}{{\mathfrak p}}
\nc{\frakS}{{\mathfrak S}}
\nc{\frakSo}{{\mathfrak S}^0}
\nc{\fraks}{{\mathfrak s}}
\nc{\os}{\overline{\fraks}}
\nc{\frakT}{{\mathfrak T}}
\nc{\frakTo}{{\mathfrak T}^0}
\nc{\oT}{\overline{T}}
\nc{\frakX}{{\mathfrak X}}
\nc{\frakXo}{{\mathfrak X}^0}
\nc{\frakx}{{\mathbf x}}
\nc{\frakTx}{\frakT}      
\nc{\frakTa}{\frakT^a}        
\nc{\frakTxo}{\frakTx^0}   
\nc{\caltao}{\calt^{a,0}}   
\nc{\ox}{\overline{\frakx}} \nc{\fraky}{{\mathfrak y}}
\nc{\frakz}{{\mathfrak z}} \nc{\oX}{\overline{X}} \font\cyr=wncyr10
\nc{\tred}[1]{\textcolor{red}{#1}} \nc{\tgreen}[1]{\textcolor{green}{#1}}
\nc{\tblue}[1]{\textcolor{blue}{#1}} \nc{\tpurple}[1]{\textcolor{purple}{#1}}
\nc{\hu}[1]{\tpurple{\underline{Huhu:}#1 }}
\nc{\liadd}[1]{\tpurple{#1}}
\nc{\xing}[1]{\tblue{\underline{Xing:}#1 }}
\nc{\markus}[1]{\tred{\underline{Markus:} #1}}
\nc{\dnx}{\Delta_n X} \nc{\dx}{\Delta X} \nc{\dgp}{{\rm deg_{P}}}
\nc{\dgt}{{\rm deg_{T}}} \nc{\dg}{{\rm deg}} \nc{\ida}{ID($A$)} \nc{\tu}{\tilde{u}} \nc{\tv}{\tilde{v}}
\nc{\nr}{\calr_n} \nc{\nz}{\calz_n} \nc{\fun}{\cala_{n,d}}
 \nc{\fbase}{\calb} \nc{\LF}{\mathrm{RF}} \nc{\FFA}{\mathrm{LF}} \nc{\irr}{\mathrm{Irr}}
 \nc{\result}{\bfk\mathrm{Irr}(S_n)}  \nc{\I}{I_{\mathrm{ID},n}^0}
 \nc{\nrs}{\calr_n^\star} \nc{\ii}{\mathrm{I}} \nc{\iii}{\mathrm{II}}
\nc{\ws}[1]{{#1}} \nc{\deleted}[1]{\delete{#1}} \nc{\plas}{placements\xspace}
\nc{\mapmonoid}{\frakM} \nc\kdot{\bfk} \nc{\mstar}{\frakM^\star(X)}
\nc{\stars}[2]{#1|_{#2}} \nc{\medmid}{{\,~{\tiny \longmid}~\,}} \nc{\nbfk}{\bfk^{\times}}
\nc{\re}[1]{R(#1)}  \nc{\paren}[1]{$($#1$)$}
\nc\gsbs{  Gr\"{o}bner-Shirshov bases }\nc{\suba}[1]{|_{#1}}\nc\gsb{  Gr\"{o}bner-Shirshov basis }
\nc\bre{{\rm bre}_P}\nc\blw[1]{\lc#1\rc}\nc\degp{{\rm deg}_P}  \nc\ordr{\leq_{\rm rb}}
\nc{\dbl}{\leq_{\rm PL_l}} \nc{\dbr}{\leq_{\rm PL_r}}\nc{\dbln}{>_{\rm PL_l}} \nc{\dbrn}{>_{\rm PL_r}}
\nc\ordrnq{>_{\rm rb}}\nc\ordb{\leq_{\rm db}}
\nc\wti{{\rm wt}_n} \nc{\wt}{\text{wt}_{\rm r}} \nc\wtb{{\rm wt}_{\rm l}}
\nc\wtd{{\rm wt}}\nc{\clex}{\mathrm{clex}}\nc{\dlex}{\leq_{\rm dlex}}
\nc\pat{{\rm Path}} \nc{\patl}{{\rm Path_{\rm l}}} \nc{\patr}{{\rm Path_{\rm r}}}
\nc{\X}{X^\ast}\nc\degx{{\rm deg}_{\X}}
\begin{document}

\title[Free differential algebra and free Rota-Baxter algebra]{Free weighted (modified) differential algebras, free (modified) Rota-Baxter algebras and Gr\"{o}bner-Shirshov bases}

\author{Zhicheng Zhu}
\address{School of Mathematics and Statistics, Lanzhou University, Lanzhou, Gansu 730000, P.\,R. China}
\email{zhuzhch16@lzu.edu.cn}

\author{Huhu Zhang}
\address{School of Mathematics and Statistics,
	Lanzhou University, Lanzhou, Gansu 730000, P. R. China}
\email{zhanghh17@lzu.edu.cn}

\author{Xing Gao$^{*}$}\thanks{* Corresponding author.} \address{School of Mathematics and Statistics,
Lanzhou University, Lanzhou, 730000, P.R. China}
\email{gaoxing@lzu.edu.cn}

\date{\today}

\begin{abstract}
In this paper, we obtain respectively some new linear bases of free unitary (modified) weighted differential algebras and free nonunitary (modified) Rota-Baxter algebras, in terms of the method of Gr\"{o}bner-Shirshov bases.
\end{abstract}

\subjclass[2010]{
16W99, 
16S10, 
13P10, 
12H05, 
08B20, 
16T30,  
}

\keywords{Gr\"{o}bner-Shirshov bases; free (modified) Rota-Baxter algebras; free weighted (modified) differential algebras}

\maketitle

\tableofcontents

\setcounter{section}{0}

\allowdisplaybreaks

\section{Introduction}
The free object is the most significant object in a category.
The free (modified) weighted differential algebra and the free (modified) Rota-Baxter algebra
have been constructed before. The aim of the present paper is to establish two new monomial orders
on bracketed words and then to obtain some new linear bases of free unitary weighted (modified) differential algebras and free nonunitary (modified) Rota-Baxter algebras on sets, using the method of Gr\"{o}bner-Shirshov bases.

Throughout this paper, let $\bfk$ denote a unitary commutative ring of characteristic zero, which will be the base ring of all modules, algebras,  tensor products, as well as linear maps.
Unless otherwise stated, all algebras are associative and noncommutative, and by an algebra we mean a unitary algebra.

\subsection{Gr\"{o}bner-Shirshov bases}
The theories of  Gr\"{o}bner-Shirshov bases and  Gr\"{o}bner bases were invented independently
by Shirshov for anti-commutative and nonassociative algebras in 1962~\cite{Sh}, by Hironaka~\mcite{Hi}(1964) and Buchberger~\mcite{Bu} (1965) for polynomial algebras. Later Bokut, Chen and Guo et al. studied the Gr\"{o}bner-Shirshov bases for associative algebras with multiple operators~\mcite{BCQ,GSZ} and constructed a lot of free objects as applications. See~\mcite{BC} and references therein for more details.

\subsection{(Modified) weighted differential algebras}
The differential algebra, introduced by Ritt~\mcite{R50}, is an algebraic approach to differential equations replacing analytic notions like differential quotient by Leibniz rule. Later due to the work of Kolchin and many other mathematicians, the differential algebra
has broad applications in mathematics and physics, such as algebraic group~\mcite{K73}, category~\mcite{PJ19}, Galois theory~\mcite{SP03}, operad~\mcite{GK94}, Poisson Hopf algebra~\mcite{GHL18} and representation theory~\mcite{S17}.

The notion of a differential algebra of weight $\lambda$ was first invented by Guo and Keigher~\mcite{GK},
defined to be an algebra equipped with a linear operator $D$ satisfying $D(1) = 0$ and the {\bf weighted Leibniz rule}
$$D(xy)=D(x)y+xD(y)+\lambda D(x)D(y),$$
which generalizes simultaneously the concept of the classical differential algebra and difference algebra~\mcite{GRR}.
Applying the same method as for free differential algebras (of weight 0), free differential algebras of weight $\lambda$ on sets
were constructed in both the commutative case and the noncommutative case~\mcite{GK}, taking the monomial $D(xy)$ as the role of leading monomial. In the present paper, we reconstruct free differential algebras of weight $\lambda$ on sets by choosing other monomials as leading monomial, via the method of Gr\"{o}bner-Shirshov bases.

A Rota-Baxter operator (see below) gives a modified Rota-Baxter operator (see below) through a linear transformation~\mcite{E}.
More precisely, from a Rota-Baxter operator $P$ of weight $\lambda$, we obtain a modified Rota-Baxter operator $Q$ of weight $-\lambda^2$
by a linear transformation: $Q :=-2P - \lambda \id$.
Motivated by this, the concept of weighted modified differential algebra
was introduced in~\mcite{PZGL}, by taking a linear transformation of a differential operator $d$.
Namely, starting from a differential operator $d$, the operator $D := d - \lambda \id$
is a modified differential operator of weight $\lambda$ satisfying
\begin{equation*}
D(xy)= D(x)y + x D(y)+ \lambda xy.
\end{equation*}
The modified differential operator of weight $\lambda$ is a kind of differential type operator, and the corresponding free object was constructed in~\mcite{GSZ}, taking the monomial $D(xy)$ as the leading monomial again.
Parrel to the differential case, we reconstruct free weighted modified differential algebras by taking other monomials
as leading monomial in this paper.

\subsection{(Modified) Rota-Baxter algebras}
The concept of a Rota-Baxter algebra was originated from Baxter's probability study to understand  Spitzer's identity in fluctuation theory~\mcite{Ba}. In that paper, Baxter was led to a
linear operator $P$ on an algebra of functions satisfying the {\bf Rota-Baxter identity}
$$P(x)P(y)=P(P(x)y+xP(y)+\lambda xy).$$
This was further developed by Rota~\mcite{Ro} and Cartier~\mcite{Ca}, concerning the relationship with combinatorics.
After a period of relatively little activity, Rota-Baxter algebra has enjoyed a resurgence in the new century. Now it has remarkable applications in mathematics and physics~\mcite{Ag,BBGN,BG,FG,GK1,PBG,GLS}.
In particular, it was found crucial applications
in the Connes-Kreimer's algebraic approach to renormalization of quantum field theory~\mcite{CK1,CK2}.

The construction of free Rota-Baxter algebras can be divided into the commutative case and the noncommutative case.
For the commutative case, Rota~\mcite{Ro} and Cartier~\mcite{Ca} gave explicit constructions of the free commutative nonunitary Rota-Baxter algebra of weight -1 on a set. Later these constructions were generalized by Guo and Keigher~\mcite{GK1}. There Guo and Keigher generalized the classical shuffle product to the mixable shuffle product and constructed the free commutative unitary Rota-Baxter algebra of any weight $\lambda$ on an algebra, containing the free Rota-Baxter algebra of Rota or Cartier as a sub-Rota-Baxter algebra.

For the noncommutative case, free noncommutative Rota-Baxter algebras of arbitrary weight $\lambda$ were constructed in both the category of unitary Rota-Baxter algebras and that of nonunit Rota-Baxter algebras~\mcite{FG,FG08,Gop}, utilizing some combinatorial objects such as  bracketed words and planar rooted trees, with the later being related rooted trees of Connes-Kreimer~\mcite{CK1} and planar binary trees of Loday-Ronco~\mcite{LR98}. In addition, Bokut et al.~\mcite{BCQ} established the Composition-Diamond lemma for nonunitary algebras with multiple linear operators and obtained a Gr\"{o}bner-Shirshov basis of the free nonunitary Rota-Baxter algebra. More generally, Guo et al. studied the Rota-Baxter type algebra and constructed the free object uniformly in that category~\cite{ZGGS}.

In all of the above constructions, the monomial $P(x)P(y)$ in the Rota-Baxter identity is treated as the role of leading monomial.
A natural question is that can we choose other monomials in the Rota-Baxter identity as the leading monomial to construct free Rota-Baxter algebras. This is carried out affirmatively in the present paper via the method of Gr\"{o}bner-Shirshov bases, focusing on the noncommutative nonunitary case.

In the 1980s, Semonov-Tian-Shansky~\cite{Sem} found that, under suitable conditions, $r$ is a solution
of the classical Yang-Baxter equation if and only if the corresponding operator $P$ is a Rota-Baxter
operator of weight zero
\begin{equation*}
[P(x),P(y)]=P[P(x),y]+P[x,P(y)]
\end{equation*}
on a Lie algebra.
As a modified form of the operator form of the classical Yang-Baxter equation,
he also initiated in that paper the {\bf modified classical Yang-Baxter equation}
\begin{equation*}
[P(x),P(y)]=P[P(x),y]+P[x,P(y)]- [x, y],
\end{equation*}
which has connections with generalized Lax pairs and affine geometry on Lie groups~\mcite{BGN1,Bo,KS}.
As the associative version, the {\bf modified associative Yang-Baxter equation}
\begin{equation*}
P(x)  P(y) = P (P(x)y) + P(xP(y))- xy.
\end{equation*}
has been applied to the study of extended $\calo$-operators, associative Yang-Baxter equations, infinitesimal bialgebras and dendriform algebras~\mcite{BGN2}. Quite recently, Guo et al. ~\mcite{ZGG19} constructed the free modified Rota-Baxter algebra via bracketed words, taking $P(x)P(y)$ as the role of leading monomial. In this paper, we also reconstructed the free nonunitary modified Rota-Baxter algebra, in terms of the method of Gr\"{o}bner-Shirshov bases by choosing other monomials as leading monomial.

\subsection{Structure of the paper} In Section 2, we review the construction of the free operated algebra on a set and some
basic notations of the theory of Gr\"{o}bner-Shirshov bases. In particular, the cornerstone Composition-Diamond lemma is recalled (Lemmas~\mref{lem:cdl} and~\mref{lem:cdlnu}), which will be used frequently in the paper.
Section 3 is devoted to some new linear bases of the free (modified) differential algebra.
We first define two new monomial orders on bracketed words (Theorem~\mref{thm:rbord}).
Based on these two monomial orders, we then acquire two new linear bases of the free differential algebra of weight zero (Theorem~\mref{thm:gsbd0} and Theorem~\mref{thm:gsbd0l}). Next we obtain a new linear basis of the free differential algebra of nonzero weight (Theorem~\mref{thm:gsbd}). Finally for the case of modified differential algebras, we errect two new linear bases of the free weighted modified differential algebra (Theorem~\mref{thm:gsbmd0r} and Theorem~\mref{thm:gsbmd0l}).
Parallel to the case of differential algebra,
some new linear bases of the free nonunitary (modified) Rota-Baxter algebra were constructed in Section 4,
utilizing the monomial orders in the last section.
After explaining why we narrow ourself to the nonunitary case, we obtain two new linear bases of the free nonunitary Rota-Baxter algebra (Theorem~\mref{thm:rbgs} and Theorem~\mref{thm:rbgs2}).
Similarly, these can be done for the free nonunitary modified Rota-Baxter algebra (Theorem~\mref{thm:mrbgs} and Theorem~\mref{thm:mrbgs2}).

\section{Free operated algebras and Gr\"{o}bner-Shirshov bases}
In this section, we review the construction of free operated algebras and
some required terminologies of the theory of Gr\"{o}bner-Shirshov bases.

\subsection{Free operated algebras} Let us start with the concept of operated algebras.
\begin{defn}~\mcite{BCQ, Gop}
An {\bf operated monoid} (resp. {\bf operated algebra}) is a monoid (resp. algebra)
$U$ together with a map (resp. linear map) $P_U: U\to U$.
\mlabel{de:mapset}
\end{defn}

Let $X$ be a set. Denote by $M(X)$ the free monoid on $X$ with the identity $\bfone$.
We will construct the free operated monoid $\frakM(X)$ on $X$.
The construction proceeds via the stages $\frakM_n(X)$ recursively defined as follows.
The initial stage is $\frakM_0(X) := M(X)$ and $\frakM_1(X) := M(X \cup \lc \frakM_0(X)\rc)$,
where $\lc \frakM_0(X)\rc:= \{ \lc u\rc \mid u\in \frakM_0(X)\}$ is a disjoint copy of $\frakM_0(X)$.
The inclusion $X\hookrightarrow X \cup \lc \frakM_0\rc $ induces a monomorphism
$$i_{0}:\mapmonoid_0(X) = M(X) \hookrightarrow \mapmonoid_1(X) = M(X \cup \lc \frakM_0\rc  )$$
of monoids through which we identify $\mapmonoid_0(X) $ with its image in $\mapmonoid_1(X)$.

For~$n\geq 2$, assume inductively that
$\frakM_{ n-1}(X)$ has been defined and the embedding
$$i_{n-2,n-1}\colon  \frakM_{ n-2}(X) \hookrightarrow \frakM_{ n-1}(X)$$
has been obtained. Then we define
\begin{equation*}
 \label{eq:frakn}
 \frakM_{ n}(X) := M \big( X\cup\lc\frakM_{n-1}(X) \rc\big).
\end{equation*}
Since~$\frakM_{ n-1}(X) = M \big(X\cup \lc\frakM_{ n-2}(X) \rc\big)$ is a free monoid,
the injection
$$  \lc\frakM_{ n-2}(X) \rc \hookrightarrow
    \lc \frakM_{ n-1}(X) \rc $$
induces a monoid embedding
\begin{equation*}
    \frakM_{ n-1}(X) = M \big( X\cup \lc\frakM_{n-2}(X) \rc \big)
 \hookrightarrow
    \frakM_{ n}(X) = M\big( X\cup\lc\frakM_{n-1}(X) \rc \big).
\end{equation*}
Finally we define the monoid
$$ \frakM (X):=\bigcup_{n\geq 0}\frakM_{ n}(X),$$
whose elements are called {\bf bracketed words} or
{\bf  bracketed monomials on $X$}. The identity of $\frakM(X)$ is $\bfone$.

Let $\kdot\frakM(X)$ be the free module spanned by
$\frakM(X)$. The multiplication on $\frakM(X)$ extends by linearity to turn the
module $\kdot\frakM(X)$ into an algebra.
Furthermore, we extend the operator
$\lc\ \rc: \frakM(X) \to \frakM(X), \ w\mapsto \lc w\rc$
to an operator $P$ on $\bfk\frakM(X)$ by linearity,
turning the algebra $\bfk\frakM(X)$ into an operated algebra.

\begin{lemma}{\rm (\cite[Corollary~3.7]{Gop})}
Let $i_X:X \to \frakM(X)$ and $j_X: \frakM(X) \to \bfk\mapm{X}$ be the natural embeddings. Then, with the notations above,
\begin{enumerate}
\item
the triple $(\frakM(X),P, i_X)$ is the free operated monoid on $X$; and
\label{it:mapsetm}
\item
the triple $(\bfk\mapm{X},P, j_X\circ i_X)$ is the free operated algebra
on $X$. \label{it:mapalgsg}
\end{enumerate}
\mlabel{pp:freetm}
\end{lemma}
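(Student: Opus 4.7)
The plan is to verify the two universal properties by induction on the layer index $n$, reducing at each stage to the universal property of the ordinary free monoid $M(-)$. Let $(U, P_U)$ be an operated monoid and $f\colon X \to U$ a set map; I want to exhibit a unique morphism of operated monoids $\bar f\colon \frakM(X) \to U$ with $\bar f \circ i_X = f$.

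First I would construct $\bar f$ stagewise. At stage $0$, the free monoid property of $\frakM_0(X) = M(X)$ produces a unique monoid morphism $\bar f_0\colon M(X) \to U$ extending $f$. Inductively, suppose a monoid morphism $\bar f_{n-1}\colon \frakM_{n-1}(X) \to U$ has been built such that $\bar f_{n-1}|_{\frakM_{n-2}(X)} = \bar f_{n-2}$ and $\bar f_{n-1}(\lc w \rc) = P_U(\bar f_{n-2}(w))$ for all $w \in \frakM_{n-2}(X)$. Define a set map
\[
g_n\colon X \cup \lc \frakM_{n-1}(X) \rc \longrightarrow U,\qquad x \mapsto f(x),\quad \lc w \rc \mapsto P_U(\bar f_{n-1}(w)).
\]
The universal property of $\frakM_n(X) = M(X \cup \lc \frakM_{n-1}(X) \rc)$ extends $g_n$ to a unique monoid morphism $\bar f_n\colon \frakM_n(X) \to U$. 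Because the embedding $\frakM_{n-1}(X) \hookrightarrow \frakM_n(X)$ is induced by $\lc \frakM_{n-2}(X) \rc \hookrightarrow \lc \frakM_{n-1}(X) \rc$, one checks that $\bar f_n$ restricts to $\bar f_{n-1}$, so the $\bar f_n$'s assemble into a monoid map $\bar f\colon \frakM(X) \to U$.

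Next I would verify that $\bar f$ is a morphism of operated monoids, i.e.\ $\bar f \circ P = P_U \circ \bar f$. Any element of $\frakM(X)$ lies in some $\frakM_n(X)$, so $P(w) = \lc w \rc$ lies in $\lc \frakM_n(X) \rc \subseteq \frakM_{n+1}(X)$, and the definition of $g_{n+1}$ gives $\bar f(\lc w \rc) = P_U(\bar f(w))$ directly. For uniqueness, suppose $\bar f'$ is any morphism of operated monoids extending $f$. By induction on $n$, $\bar f'$ and $\bar f$ agree on $\frakM_n(X)$: on $\frakM_0(X)$ by uniqueness in the free monoid; and on $\frakM_n(X)$ since both are monoid morphisms agreeing on the generators $X \cup \lc \frakM_{n-1}(X) \rc$ (the values on $\lc w \rc$ are forced by the operator compatibility plus the induction hypothesis). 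This proves (a).

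For (b), one invokes the standard free-module adjunction: $\bfk \frakM(X)$ is the free $\bfk$-module on $\frakM(X)$, and its multiplication and operator $P$ extend those of $\frakM(X)$ by linearity. Given an operated algebra $(A, P_A)$ and a set map $f\colon X \to A$, apply part (a) to the underlying operated monoid $(A, P_A)$ to obtain $\bar f\colon \frakM(X) \to A$, then extend $\bar f$ linearly to $\tilde f\colon \bfk \frakM(X) \to A$; one checks multiplicativity and operator compatibility hold on the basis $\frakM(X)$ and therefore globally, and uniqueness follows from the uniqueness in part (a) together with the fact that $\frakM(X)$ spans $\bfk\frakM(X)$.

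The main obstacle I anticipate is purely bookkeeping: keeping the recursively defined stages $\frakM_n(X)$ and the embeddings $i_{n-1,n}$ compatible with both the monoid structure and the operator, so that the inductive definition of $\bar f_n$ genuinely restricts to $\bar f_{n-1}$ on the nose rather than merely up to the identifications. Once this compatibility is set up cleanly, every remaining verification is a direct appeal to the universal property of ordinary free monoids and free modules.
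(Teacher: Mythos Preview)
Your proof is correct and follows the natural layer-by-layer induction that one would expect for a direct-limit construction like $\frakM(X) = \bigcup_n \frakM_n(X)$. Note, however, that the paper does not give its own proof of this lemma: it simply cites \cite[Corollary~3.7]{Gop} and moves on. So there is no in-paper argument to compare against; your write-up supplies precisely the standard argument that the cited reference contains, namely the stagewise extension via the universal property of the free monoid at each level, followed by the free-module/linearization step for part~(b). The bookkeeping concern you flag (that $\bar f_n$ genuinely restricts to $\bar f_{n-1}$) is the only real point requiring care, and your sketch handles it correctly by appealing to uniqueness of the free-monoid extension on the generating set $X \cup \lc \frakM_{n-1}(X)\rc$.
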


\subsection{Gr\"{o}bner-Shirshov bases} \mlabel{ssec:gsbasis}
In this subsection, we provide some backgrounds on Gr\"obner-Shirshov bases~\cite{BCQ, GGZ, GSZ}.

\begin{defn}
Let $\star$ be a symbol not in $X$ and $X^\star = X \sqcup \{\star\}$.
\begin{enumerate}
\item By a {\bf $\star$-bracketed word} on $X$, we mean any bracketed word in $\mapm{X^\star}$ with exactly one occurrence of $\star$, counting multiplicities. The set of all $\star$-bracketed words on $X$ is denoted by $\frakM^{\star}(X)$.
\item For $q\in \mstar$ and $u \in  \frakM({X})$, we define $\stars{q}{u}:= q|_{\star \mapsto u}$ to be the bracketed word on $X$ obtained by replacing the symbol $\star$ in $q$ by $u$.

\item For $q\in \mstar$ and $s=\sum_i c_i u_i\in \bfk\frakM{(X)}$, where $c_i\in\bfk$ and $u_i\in \frakM{(X)}$, we define
\begin{equation*}
 q|_{s}:=\sum_i c_i q|_{u_i}\,. \vspace{-5pt}
\end{equation*}
\end{enumerate}
\mlabel{def:starbw}
\end{defn}

Operated ideals in $\bfk\mapm{X}$ can be characterized by $\star$-bracketed words.

\begin{lemma}{\rm (\cite[Lemma~3.2]{GSZ})}
Let $S \subseteq \bfk\mapm{X}$. Then the operated ideal generated by $S$ is
\begin{equation*}
\hspace{10pt}\Id(S) = \left\{\, \sum_{i} c_i q_i|_{s_i} \mid c_i\in \bfk,
q_i\in \frakM^{\star}(X), s_i\in S \right\}.
\end{equation*}
\mlabel{lem:opideal}
\end{lemma}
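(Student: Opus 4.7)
\medskip

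\noindent\textbf{Proof proposal.} Denote the right-hand side by $T$. The plan is to prove the two inclusions $T\subseteq \Id(S)$ and $\Id(S)\subseteq T$ separately. The first inclusion says each generator $q|_s$ already lies in every operated ideal containing $S$; the second says $T$ itself is such an operated ideal.

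For $T\subseteq \Id(S)$, it suffices by bilinearity to show that $q|_s\in \Id(S)$ for every $q\in\mstar$ and $s\in S$. I would argue by induction on the bracket depth $\dep(q)$, using the fact that each $q\in\mstar$ admits a unique factorization
\begin{equation*}
q = u_1\, v\, u_2, \qquad u_1,u_2\in \frakM(X),
\end{equation*}
where $v$ is the unique letter containing the occurrence of $\star$, so either $v=\star$ or $v=\lc q'\rc$ with $q'\in\mstar$. In the base case $v=\star$ we have $q|_s=u_1 s u_2$, which is in $\Id(S)$ because $\Id(S)$ is a two-sided ideal. In the inductive step $v=\lc q'\rc$, the induction hypothesis gives $q'|_s\in\Id(S)$; since $\Id(S)$ is closed under $P$, we obtain $\lc q'|_s\rc\in\Id(S)$, and multiplying on both sides by $u_1,u_2$ yields $q|_s=u_1\lc q'|_s\rc u_2\in\Id(S)$.

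For $\Id(S)\subseteq T$, I would check that $T$ is an operated ideal containing $S$. Taking $q=\star$ gives $s=\star|_s\in T$, so $S\subseteq T$. Closure under addition and $\bfk$-scalar multiplication is immediate from the definition. For two-sided ideal closure, observe that for any $u\in\frakM(X)$ and $q\in\mstar$ the concatenations $uq$ and $qu$ still lie in $\mstar$ (a single $\star$ remains), and $(uq)|_s = u\cdot q|_s$ and $(qu)|_s = q|_s\cdot u$; extending linearly covers arbitrary elements of $\bfk\frakM(X)$. Finally, for operator closure note that if $q\in\mstar$ then $\lc q\rc\in\mstar$ and $P(q|_s)=\lc q|_s\rc = \lc q\rc|_s\in T$, and again one extends by linearity.

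The argument is essentially bookkeeping: the only nontrivial step is the structural decomposition of a $\star$-bracketed word and the recursive passage under $\lc\ \rc$, but this is precisely what the inductive definition of $\frakM(X)$ and the presence of exactly one $\star$ make transparent. I do not anticipate a genuine obstacle; the main care is to ensure the unique factorization $q=u_1 v u_2$ is well-defined so that the induction on $\dep(q)$ is unambiguous.
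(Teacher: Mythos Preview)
Your proof is correct and is the standard double-inclusion argument for this kind of characterization. Note, however, that the paper does not actually prove this lemma: it is quoted verbatim from \cite[Lemma~3.2]{GSZ} and stated without proof, so there is no ``paper's own proof'' to compare against. Your argument is exactly what one would expect such a proof to look like, and no step in it is problematic.
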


A monomial order is a well-order compatible with all operations, and the definition of Gr\"{o}bner-Shirshov bases is based on a fixed monomial order.

\begin{defn}
A {\bf monomial order on $\frakM(X)$} is a
well-order $\leq$ on $\frakM(X)$ such that
\begin{equation}
u < v \Longrightarrow q|_u < q|_v, \,\text{ for } \, u, v \in \frakM(X)\,\text{ and }\, q \in \frakM^{\star}(X).
\mlabel{eq:morder}
\end{equation}
\mlabel{de:morder}
\end{defn}

Since $\leq$ is a well-order, it follows from Eq.~\meqref{eq:morder} that $1 \leq u$ and $u < \lc u \rc$ for all $u \in \frakM(X)$.

\begin{defn}
Let $s \in \bfk\mapm{X}$ and $\leq$ a linear order on $\frakM(X)$.
\begin{enumerate}
\item Let $s \notin \bfk$. The {\bf leading monomial} of $s$, denoted by $\lbar{s}$, is the largest monomial appearing in $s$. The {\bf leading coefficient of $s$}, denoted by $c_s$, is the coefficient of $\lbar{s}$ in $s$.

\label{item:monic}

\item If $s \in \bfk$, we define the {\bf leading monomial of $s$} to be $1$ and the {\bf leading coefficient of $s$} to be $c_s= s$.\label{item:scalar}

\item The element $s$ is called {\bf monic  with respect to $\leq$} if $s\notin \bfk $ and $c_s=1$. A subset $S \subseteq \bfk\mapm{X}$ is called {\bf monic with respect to $\leq$} if every $s \in S$ is monic with respect to $\leq$.
\mlabel{it:res}
\end{enumerate}
\mlabel{def:irrS}
\end{defn}

The following are the main concepts of \gsbs based on $\bfk\frakM(X)$.
A nonunit element $w$ of $\mapm{X}$ can be uniquely expressed in the form
$$w=w_1w_2\cdots w_k$$
for some $k$ and some $w_i \in X \cup \blw{\mapm{X}}$, $1\leq i\leq k.$ In this case, we call $w_i$ {\bf prime} and $k$ the {\bf breadth} of $w$, denoted by $|w|:=k$. We also define $|\bfone| = 0$ for convenience.

\begin{defn}
Let $\leq$ be a monomial order on $\mapm{X}$ and $f, g \in\bfk\mapm{X}$ monic.
\begin{enumerate}
  \item  If there are $u, v,w\in \mapm{X}$ such that $w = \lbar{f}u = v\lbar{g}$ with max$\{|\lbar{f}|, |\lbar{g}|\} < |w| <
|\lbar{f}| + |\lbar{g}|$, we call
$$(f,g)_w:=(f,g)^{u,v}_w:= fu - vg$$
the {\bf intersection composition of $f$ and $g$ with respect to $w$}.
\mlabel{item:intcomp}
  \item  If there are $w\in\mapm{X}$ and $q\in\frakM^{\star}(X)$ such that $w = \lbar{f} = q\suba{\lbar{ g}},$ we call
$$(f,g)_w:=(f,g)^q_w := f - q\suba{g}$$
the {\bf including composition of $f$ and $g$ with respect to $w$}.
\mlabel{item:inccomp}
\end{enumerate}
\mlabel{defn:comp}
\end{defn}

\begin{defn}
Let $\leq$ be a monomial order on $\mapm{X}$ and  $S\subseteq\bfk\mapm{X}$ monic.
\begin{enumerate}
\item  An element $f\in\bfk\mapm{X}$ is called {\bf  trivial modulo $(S, w)$} with $w\in \frakM(X)$ if
$$f =\sum_ic_i q_i\suba{s_i} \, \text{ with } \lbar{q_i\suba{s_i}} < w, \,\text{ where } c_i\in\bfk, q_i\in\frakM^{\star}(X), s_i\in S,$$
denoted by $f\equiv 0 \mod(S, w)$. We write $f \equiv g \mod(S, w)$ if $f-g \equiv 0 \mod(S, w)$.

\item   We call $S$ a {\bf \gsb} in $\bfk\mapm{X}$ with respect
to $\leq$ if, for all pairs $f, g \in S$, every intersection composition of the form $(f, g)^{u,v}_w$
is trivial modulo $(S, w)$, and every including composition of the form $(f, g)^q_w$ is trivial modulo $(S, w)$.
\end{enumerate}
\end{defn}

The Composition-Diamond lemma is the corner stone of the theory of Gr\"{o}bner-Shirshov bases.

\begin{lemma} {\rm (Composition-Diamond lemma \cite{GSZ})}
Let $\leq$ be a monomial order on $\mapm{X}$ and $S \subseteq \bfk\mapm{X}$ monic with respect to $\leq$.
Then the following conditions are equivalent.
\begin{enumerate}
\item $S $ is a Gr\"{o}bner-Shirshov basis in $\bfk\mapm{X}$.
\label{it:cd1}

\item For every nonzero $f \in \Id(S)$, $\lbar{f}=q|_{\overline{s}}$
for some $q \in \frakM^\star(X)$ and some $s\in S$. \label{it:cd2}

\item  For every nonzero $f \in \Id(S)$, $f$ can be expressed in triangular form
\begin{equation*}
f= c_1\stars{q_1}{s_1}+ c_2\stars{q_2}{s_2}+\cdots+ c_nq_n|_{s_n}, \label{eq:fexp1}
\end{equation*}
where $0\neq c_i\in \bfk$, $s_i\in S$, $q_i\in \frakM^{\star}(X)$ for $1 \leq i \leq n$, and
$$ \stars{q_1}{\overline{s_1}}> \stars{q_2}{\overline{s_2}}
> \cdots> q_n|_{\overline{s_n}}.$$ \label{it:cd3}

\item $\eta(\Irr(S))$ is a $\bfk$-basis of $\bfk\mapm{X}/\Id(S)$, where
$\eta\!: \bfk\mapm{X} \to \bfk\mapm{X}/\Id(S)$ is the canonical homomorphism of modules and
\begin{equation*}
\irr(S)= \frakM(X) \setminus \{ q|_{\lbar{s}} \mid s\in S, q\in\frakM^{\star}(X)\}.
\end{equation*}
More precisely as modules, $$\bfk \mapm{X}=\bfk\Irr(S)\oplus \Id(S).$$
\label{it:cd4}
\end{enumerate}
\mlabel{lem:cdl}
\end{lemma}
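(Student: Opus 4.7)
The plan is to establish the cycle \mref{it:cd1}$\Rightarrow$\mref{it:cd2}$\Rightarrow$\mref{it:cd3}$\Rightarrow$\mref{it:cd4}$\Rightarrow$\mref{it:cd1}, with the well-order on $\frakM(X)$ driving every induction and termination claim, and Lemma~\mref{lem:opideal} used whenever we pass between ``lies in $\Id(S)$'' and an explicit expression $\sum_i c_i q_i|_{s_i}$.

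The technical heart is \mref{it:cd1}$\Rightarrow$\mref{it:cd2}. Given $0\neq f\in \Id(S)$, Lemma~\mref{lem:opideal} lets us write $f=\sum_{i=1}^n c_i q_i|_{s_i}$, and we set $w_i:=q_i|_{\lbar{s_i}}$. Among all such representations of $f$, pick one that first minimizes $w:=\max_i w_i$ and then $N:=\#\{i\mid w_i=w\}$; this is possible by the well-order. If $\lbar{f}=w$, the required $q,s$ is read off directly, so assume $\lbar{f}<w$. Then the leading terms of the indices with $w_i=w$ must cancel, forcing $N\geq 2$. Pick two such indices, say $i=1,2$, with $q_1|_{\lbar{s_1}}=q_2|_{\lbar{s_2}}=w$. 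The two occurrences of $\lbar{s_1}$ in $q_1|_{\star}$ and $\lbar{s_2}$ in $q_2|_{\star}$ inside $w$ fall into exactly one of three configurations: disjoint (the placements do not overlap), including (one is nested inside the other), or intersecting (partial overlap). The including and intersecting cases are precisely those of Definition~\mref{defn:comp}, so the Gr\"obner-Shirshov hypothesis rewrites $c_1 q_1|_{s_1}+c_2 q_2|_{s_2}$ as a sum $\sum_j d_j p_j|_{t_j}$ with every $p_j|_{\lbar{t_j}}<w$, strictly reducing $(w,N)$ and contradicting minimality. In the disjoint case, one factors out a $\star$-context carrying two separated holes, substitutes $s_i=\lbar{s_i}+(\text{lower})$ in each hole and regroups; again the result has strictly smaller $N$.

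The remaining implications are routine. For \mref{it:cd2}$\Rightarrow$\mref{it:cd3}: given $0\neq f\in \Id(S)$, \mref{it:cd2} supplies $q,s$ with $\lbar{f}=q|_{\lbar{s}}$, and $f-c_f q|_s\in \Id(S)$ either vanishes or has strictly smaller leading monomial, so induction on $\lbar{f}$ assembles the triangular form. For \mref{it:cd3}$\Rightarrow$\mref{it:cd4}: spanning $\bfk\mapm{X}=\bfk\Irr(S)+\Id(S)$ is a standard reduction --- iteratively replace each submonomial $q|_{\lbar{s}}$ of a given $g\in \bfk\mapm{X}$ by the lower-order element $q|_{\lbar{s}}-q|_s$, which terminates by well-ordering --- and linear independence is forced by \mref{it:cd3}, since any nonzero element of $\bfk\Irr(S)\cap \Id(S)$ would, by the triangular form, have leading monomial of the shape $q|_{\lbar{s}}$, contradicting its lying in $\bfk\Irr(S)$. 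For \mref{it:cd4}$\Rightarrow$\mref{it:cd1}: given a composition $h=(f,g)_w$, we have $h\in \Id(S)$ and $\lbar{h}<w$; apply the spanning reduction to $h$, which only uses rewrites $q|_{\lbar{s}}\rightsquigarrow q|_{\lbar{s}}-q|_s$ with $\lbar{q|_s}\leq \lbar{h}<w$, to obtain $h=h'+h''$ with $h''=\sum c_i q_i|_{s_i}\in \Id(S)$ and every $\lbar{q_i|_{s_i}}<w$, and $h'\in \bfk\Irr(S)\cap \Id(S)=0$; hence $h\equiv 0\pmod{(S,w)}$.

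I expect the main obstacle to be the overlap analysis in \mref{it:cd1}$\Rightarrow$\mref{it:cd2}. Bracketed words combine associative concatenation with the nested operator $\lc\cdot\rc$, so locating a sub-bracketed-word inside a larger one and classifying how two placements sit relative to each other requires finer case analysis than the purely associative Bergman setup. Verifying that the disjoint case really is benign, and that the including and intersecting configurations correspond exactly to the two composition types of Definition~\mref{defn:comp}, is where the combinatorial book-keeping genuinely lives.
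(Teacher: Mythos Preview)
The paper does not prove this lemma at all; it is quoted from \cite{GSZ} (and, in the nonunitary form, from \cite{BCQ}) as a known result, with no argument given. Your outline is the standard proof of the Composition-Diamond lemma in the operated setting, following exactly the strategy of those references: the cycle \mref{it:cd1}$\Rightarrow$\mref{it:cd2}$\Rightarrow$\mref{it:cd3}$\Rightarrow$\mref{it:cd4}$\Rightarrow$\mref{it:cd1}, with the overlap trichotomy (disjoint / including / intersecting) in \mref{it:cd1}$\Rightarrow$\mref{it:cd2} as the substantive step. So there is nothing to compare against in this paper; your sketch is correct and aligns with the cited sources, and you have correctly flagged the one place where genuine work is required, namely the placement analysis for bracketed words.
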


The following is the nonunitary version of the above lemma.
The key point is in place of the free monoid in the construction of $\mapm{X}$
by the free semigroup to obtain $\frakS(X)$. See\mcite{BCQ} for more details.

\begin{lemma} {\rm (Composition-Diamond lemma \cite{BCQ})}
Let $\leq$ be a monomial order on $\frakS(X)$ and $S \subseteq \bfk\frakS(X)$ monic with respect to $\leq$.
Then the following conditions are equivalent.
\begin{enumerate}
\item $S $ is a Gr\"{o}bner-Shirshov basis in $\bfk\frakS(X)$.
\label{it:cdnu1}

\item For every nonzero $f \in \Id(S)$, $\lbar{f}=q|_{\overline{s}}$
for some $q \in \frakS^\star(X)$ and some $s\in S$. \label{it:cdnu2}

\item  For every nonzero $f \in \Id(S)$, $f$ can be expressed in triangular form
\begin{equation*}
f= c_1\stars{q_1}{s_1}+ c_2\stars{q_2}{s_2}+\cdots+ c_nq_n|_{s_n}, \label{eq:fexpnu1}
\end{equation*}
where $0\neq c_i\in \bfk$, $s_i\in S$, $q_i\in \frakS^{\star}(X)$ for $1 \leq i \leq n$, and
$$ \stars{q_1}{\overline{s_1}}> \stars{q_2}{\overline{s_2}}
> \cdots> q_n|_{\overline{s_n}}.$$ \label{it:cdnu3}

\item $\eta(\Irr(S))$ is a $\bfk$-basis of $\bfk\frakS(X)/\Id(S)$, where
$\eta\!: \bfk\frakS(X) \to \bfk\frakS(X)/\Id(S)$ is the canonical homomorphism of modules and
\begin{equation*}
\irr(S)= \frakS(X) \setminus \{ q|_{\lbar{s}} \mid s\in S, q\in\frakS^{\star}(X)\}.
\end{equation*}
More precisely as modules, $$\bfk \frakS(X)=\bfk\Irr(S)\oplus \Id(S).$$
 \label{it:cdnu4}
\end{enumerate}
\mlabel{lem:cdlnu}
\end{lemma}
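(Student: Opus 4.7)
The plan is to prove the four conditions are equivalent via the cycle $(1)\Rightarrow(2)\Rightarrow(3)\Rightarrow(4)\Rightarrow(1)$, following the standard template for Composition--Diamond lemmas but carried out in the free nonunitary operated algebra $\bfk\frakS(X)$. Throughout I would use that $\leq$ is a monomial order, so $u<v$ implies $q|_u<q|_v$, together with the nonunitary analogue of Lemma~\mref{lem:opideal}, which lets every element of $\Id(S)$ be written as $\sum_i c_i q_i|_{s_i}$ with $q_i\in\frakS^\star(X)$ and $s_i\in S$.

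The easy half is $(2)\Rightarrow(3)\Rightarrow(4)\Rightarrow(1)$. For $(2)\Rightarrow(3)$, given $0\neq f\in\Id(S)$, (2) supplies $q\in\frakS^\star(X)$ and $s\in S$ with $\lbar{f}=q|_{\lbar{s}}$; subtract $c_f\,q|_s$ to produce $f'$ with $\lbar{f'}<\lbar{f}$, and iterate, using well-foundedness of $\leq$ to terminate. For $(3)\Rightarrow(4)$, repeated triangular reduction shows $\bfk\Irr(S)+\Id(S)=\bfk\frakS(X)$; if $\sum d_j u_j\in\Id(S)$ with $u_j\in\Irr(S)$ were nonzero, (3) would express its leading monomial as $q|_{\lbar{s}}$, contradicting membership of each $u_j$ in $\Irr(S)$. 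For $(4)\Rightarrow(1)$, every composition $(f,g)_w$ lies in $\Id(S)$ with leading monomial strictly below $w$, so the triangular form from (3) (which (4) supplies via the direct sum decomposition) gives triviality modulo $(S,w)$.

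The genuinely hard step is $(1)\Rightarrow(2)$. Given $0\neq f\in\Id(S)$, write $f=\sum_i c_i q_i|_{s_i}$ and set $w:=\max_i q_i|_{\lbar{s_i}}$. If $w=\lbar{f}$ we are done; otherwise cancellation occurs at level $w$ and at least two indices $i,j$ satisfy $q_i|_{\lbar{s_i}}=q_j|_{\lbar{s_j}}=w$. The strategy is to rewrite $f$ so that $w$ strictly decreases, then apply Noetherian induction. Two placements of leading monomials yielding the same bracketed word can only occur when the occurrences of $\lbar{s_i}$ and $\lbar{s_j}$ in $w$ are (a) disjoint and separable, (b) overlapping as prefix/suffix (intersection case), or (c) one contained in the other (inclusion case). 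Case (a) is handled directly by treating one placement independently; case (b) invokes triviality of the intersection composition $(s_i,s_j)_{w'}$ at the overlap word $w'$; case (c) invokes triviality of the including composition. In each case, hypothesis (1) rewrites $c_i q_i|_{s_i}+c_j q_j|_{s_j}$ as a sum of placements with leading monomials strictly less than $w$, after which induction closes.

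The chief obstacle is executing this case analysis rigorously in the operated setting, because two distinct placements $q_i|_{\lbar{s_i}}=q_j|_{\lbar{s_j}}$ can involve overlaps at any nesting depth of the brackets $\lc\ \rc$. I would address this by unique prime decomposition of each bracketed word as a product of elements of $X\cup\lc\frakS(X)\rc$, then recurse on bracket depth to reduce to the three flat cases above. The nonunitary setting removes $\bfone$ from the picture, which slightly simplifies the prime decomposition (every subword has positive breadth at every bracket level), but conversely requires verifying that each intermediate word produced during reduction is a genuine element of $\frakS(X)$ rather than the empty word; this bookkeeping is the one substantive divergence from the unitary Lemma~\mref{lem:cdl} and explains why the nonunitary version is attributed separately to~\mcite{BCQ}.
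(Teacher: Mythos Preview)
The paper does not prove this lemma at all: it is stated with a citation to~\mcite{BCQ} and used as a black box, exactly as the unitary version (Lemma~\mref{lem:cdl}) is cited from~\mcite{GSZ}. There is therefore no ``paper's own proof'' to compare against.

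Your proposal is the standard Composition--Diamond argument, and it is essentially what one finds in the cited source~\mcite{BCQ}. The cycle $(1)\Rightarrow(2)\Rightarrow(3)\Rightarrow(4)$ is handled correctly. One minor wobble: in your $(4)\Rightarrow(1)$ step you invoke the triangular form of $(3)$, but you have not yet established $(4)\Rightarrow(3)$. The clean fix is to prove $(4)\Rightarrow(2)$ directly: given $0\neq f\in\Id(S)$, repeatedly reduce any non-$\Irr(S)$ monomial of $f$ by subtracting the corresponding $c\,q|_s$; the result stays in $\Id(S)$ and, by the direct sum in $(4)$, must eventually reach $0$. If $\lbar{f}$ were in $\Irr(S)$ its coefficient would survive every reduction, contradicting termination at $0$; hence $\lbar{f}=q|_{\lbar{s}}$, which is $(2)$, and then $(2)\Rightarrow(3)\Rightarrow(1)$ closes the loop. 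Your treatment of the hard implication $(1)\Rightarrow(2)$---the three-case placement analysis (disjoint, intersection, inclusion) carried out recursively through the bracket structure---is exactly the approach of~\mcite{BCQ}, and your remark that the nonunitary setting simplifies the prime decomposition while requiring a check that intermediate words remain in $\frakS(X)$ is accurate.
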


\section{New bases of free weighted (modified) differential algebras}
In this section, we obtain some new linear bases of free weighted differential algebras and free weighted modified differential
algebras, in terms of the method of Gr\"{o}bner-Shirshov bases.

\subsection{Two monomial orders on $\mapm{X}$}
We are going to build two new monomial orders on top of $\mapm{X}$,
which are motivated from the path-lexicographical order on planar rooted trees~\cite{He}.
Let us first give an example to expose our idea. Denote $\X= X\sqcup\{\lc \bfone\rc\}$.

\begin{exam} Let $w=\blw{x\blw{y}z}\lc \bfone\rc\in\mapm{X}$ with $x,y,z\in X$.
From left to right, $w$ has four letters $x, y, z,\lc \bfone\rc\in \X$.
We would like to define $\pat_w(x):= \mu P\mu x$ for the first letter $x$, $\pat_w(y):= \mu P\mu Py$ for the second letter $y$,
$\pat_w(z):= \mu P \mu z$ for the third letter $z$ and $\pat_w(\blw{\bfone}):= \mu P$ for the fourth letter $\lc \bfone\rc$.
Here $P$ and $\mu$ stand for the operator $\lc \ \rc$
and the multiplication, respectively. Finally, define
$$\patl(w) := \Big(\pat_w(x),\pat_w(y), \pat_w(z), \pat_w(\lc \bfone\rc)\Big)=( \mu P\mu x,\mu P\mu Py,\mu P \mu z, \mu P )$$
and
$$\patr(w):= \Big(\pat_w(\lc \bfone\rc), \pat_w(z), \pat_w(y), \pat_w(x)\Big)=( \mu P , \mu P \mu z, \mu P\mu Py, \mu P\mu x).$$
\hspace*{\fill}$\square$
\end{exam}

Formally, let $X$ be a set of letters and $\X = X\sqcup\{\lc\bfone\rc\}$.
Define $\degx(\bfone):= 0$. For $w\in \frakM(X)\setminus \{\bfone\}$, denote by $\degx(w)$ the total number of all occurrences of all letters $x\in \X$ in $w$. For example,
$$\degx(\lc \bfone\rc) = 1, \, \degx(x)=1,\,\degx(xx)=2\, \text{ and }\, \degx(x\lc \bfone\rc) = 2.$$

Let $P$ and $\mu$ be two symbols not in $\X$, representing the operator $\lc \ \rc$
and the multiplication, respectively.
Let $w\in \frakM(X)$. Recall $|w|$ is the breadth of $w$ and $\dep(w) := {\rm min}\{n \mid w\in\frakM_n(X)\}$ is the depth of $w$.
Assume that $x_1, \cdots, x_k$ are all letters in $\X$ appearing in $w$, from left to right.
Notice that $x_i$ and $x_j$ may be equal for different $i$ and $j$. For each $x_i$ with $1\leq i\leq k$, we define
$\pat_w(x_i)\in M(X\sqcup \{ P,\mu\})$ inductively on $\dep(w)\geq 0$. For the initial step of $\dep(w)=0$, if $w = \bfone$, then
define $\pat_w(x_i):= \bfone$; if $w\neq \bfone$, then $x_i\in X$ and define
$$\pat_w(x_i):=
\left\{
\begin{array}{ll}
x_i\in M(X\sqcup \{P, \mu\}), & \hbox{if $|w|=1$;}\\
\mu x_i \in M(X\sqcup \{P, \mu\}), & \hbox{if $|w|\geq2 $.}\\
\end{array}
\right.$$
For $\dep(w)\geq1$, we reduce to the induction on breadth $|w|\geq 1$.
If $w = \lc \bfone\rc$, then $x_i = \lc \bfone\rc$ and define $\pat_w(x_i):= P.$
Suppose $w\neq \lc \bfone\rc$. If $|w|=1$, then $w=\blw{w'}$ for some $w'\in \mapm{X}\setminus\{\bfone\}$ and define
$$\pat_w(x_i):= P\,\pat_{w'}(x_i)\in M(X\sqcup \{P, \mu\} ),$$
where $\pat_{w'}(x_i)$ is defined by the induction hypothesis on depth.
If $|w|\geq2$, then $w=w_1\cdots w_n$ for some prime elements $w_1, \cdots, w_n$ and
$x_i$ appears in some $w_j$. Define
$$\pat_w(x_i):=\mu \pat_{w_j}(x_i),$$
where $\pat_{w_j}(x_i)$ is defined by the induction hypothesis on breadth.
Finally, we define
\begin{equation*}
\begin{split}
\patl(w):=&\Big(\pat_w(x_1),\pat_w(x_2), \cdots, \pat_w(x_k)\Big)\in (M(X\sqcup \{P, \mu\}))^k,\\
\patr(w):=&\Big(\pat_w(x_k), \pat_w(x_{k-1}),\cdots, \pat_w(x_1)\Big)\in (M(X\sqcup \{P, \mu\}))^k.
\end{split}
\end{equation*}
In particular, we have
\begin{align*}
\patl(\bfone)=&\ \patr(\bfone)=\bfone,\\
\patl(\blw{\bfone})=&\ \patr(\blw{\bfone})= P = P\bfone =P\, \patl(\bfone),\\
\patl(\blw{\bfone}\blw{\bfone})=&\ (\mu P,\mu P) =\patr(\blw{\bfone}\blw{\bfone}).
\end{align*}

We give an example for better understanding.

\begin{exam}
Let $u=\blw{x\blw{y}}\blw{\bfone} \in\mapm{X}$ and $v=\blw{xy}z$ with $x,y,z\in X$. Then
\begin{align*}
\patl(u)=&~\Big(\pat_u(x),\pat_u(y),\pat_u(\lc\bfone\rc) \Big)= (\mu P\mu x,\mu P\mu Py,\mu P),\\
\patr(u)=&~\Big(\pat_u(\lc\bfone\rc),\pat_u(y),\pat_u(x) \Big)= (\mu P, \mu P\mu Py, \mu P\mu x),\\
\patl(v)=&~\Big(\pat_v(x),\pat_v(y),\pat_v(z) \Big)=(\mu P\mu x,\mu P\mu y,\mu z),\\
\patr(v)=&~\Big(\pat_v(z),\pat_v(y),\pat_v(x)\Big)=(\mu z, \mu P\mu y,\mu P\mu x).
\end{align*}
\qed
\end{exam}

Let $(X,\leq)$ be a well-ordered set.
Recall the {\bf degree lexicographical order} $\dlex $ on $M(X)$ is, for  any $u=u_1\cdots u_k\, ,v=v_1\cdots v_\ell \in M(X)\setminus\{\bfone\}$, where $u_1,\cdots,u_k,v_1,\cdots, v_\ell\in X$,
\begin{equation*}
u\dlex v\Leftrightarrow
\left\{
\begin{array}{l}
|u|<|v|,\\[5pt]
\text{or}~ |u|=|v|(=k) ~ \text{and} ~ (u_1, \cdots, u_k) \leq  (v_1, \cdots, v_k)\, \text{ lexicographically}.
\end{array}
\right.
\end{equation*}
Here we use the convention that $\bfone\leq  u$ for all $u\in M(X)$.
It is well-known that the degree lexicographical order $\dlex$ on $M(X)$ is a well-order~\mcite{BN}.
Assume $x<P<\mu$ for all $x\in X$. Then $X\sqcup \{P, \mu\}$ is a well-ordered set,
and we have a degree lexicographical order $\dlex$ on $M(X\sqcup \{P, \mu\})$.
Let $u,v \in\mapm{X}$. Define
\begin{align}
u\dbl v \Leftrightarrow& \Big(\degx(u), \patl(u)\Big)\leq\Big(\degx(v), \patl(v)\Big)\,  \text{ lexicographically},\mlabel{eq:ordl}\\
u\dbr v \Leftrightarrow& \Big(\degx(u), \patr(u)\Big)\leq\Big(\degx(v), \patr(v)\Big)\,  \text{ lexicographically}.
\mlabel{eq:ordr}
\end{align}
Here $\pat_u(x_i)$ and $\pat_v(y_j)$ in $M(X\sqcup \{P, \mu\})$ are compared by $\dlex$.
Let us expose an example.

\begin{exam}
Let $u=\blw{x}y$, $v=x\blw{y}$ and $w=\blw{xy}$ with $x,y \in X$.  Then
\begin{align*}
 \patl(u)&=\Big(\mu Px, \mu y\Big),\quad \patr(u)=\Big( \mu y, \mu Px\Big),\\
\patl(v)&=\Big(\mu x, \mu Py\Big),\quad\patr(v)=\Big( \mu Py, \mu x\Big),\\
 \patl(w)&=\Big(P\mu x, P\mu y\Big),\, \patr(w)=\Big( P\mu y, P\mu x\Big).
\end{align*}
Since $$\degx(u)=\degx(v)=\degx(w)=2\,\text{ and }\, \mu P x >_{{\rm dlex}} P\mu x >_{{\rm dlex}} \mu x$$
by $\mu > P$, we have $u\dbln w\dbln v.$ Similarly, $ v\dbrn w\dbrn u.$
\hspace*{\fill}$\square$
\end{exam}

\begin{remark}
In the above constructions of $\dbl$ and $\dbr$, the assumption $\mu>P>x$ for all $x\in X$ is to insure the required leading monomials used
for the case of weighted (modified) differential algebras in Section 3 and the case of (modified) Rota-Baxter algebras in Section 4.
\end{remark}

By the above construction process, the two orders $\dbl$ and $\dbr$ are indeed well-orders.
To prove this, the following result is needed.
Let $\leq_{Y_i}$ be a well-order on $Y_i$ for $1\leq i\leq k$ and $k\geq 1$.
Recall that the lexicographical order $\leq_{\clex}$ on the cartesian product $Y_1  \times Y_2\times\cdots \times Y_k$ is defined recursively by
\begin{eqnarray}
(x_1,x_2,\cdots,x_k) &\leq_{\clex}&(y_1,y_2, \cdots,y_k) \nonumber\\&\Leftrightarrow&
\left\{\begin{array}{ll} x_1<_{Y_1}y_1,\\[5pt]
\text{or}~ x_1= y_1 ~\text{and}~ (x_2,\cdots,x_k)\leq_{\clex}(y_2, \cdots,y_k),
\end{array}\right.\nonumber
\end{eqnarray}
where $(x_2,\cdots,x_k)\leq_{\clex} (y_2,\cdots,y_k)$ is defined by the induction hypothesis, with the convention that $\leq_{\clex}$ is the trivial relation when $k=1$.

\begin{lemma}\mcite{Ha}
Let $\leq_{Y_i}$ be a well-order on $Y_i$ for $1\leq i\leq k$ and $k\geq 1$.
Then the lexicographical order $\leq_{\clex}$ is a well-order on the cartesian product
$Y_1 \times Y_2\times\cdots \times Y_k$.
\mlabel{lem:carw}
\end{lemma}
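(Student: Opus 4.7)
The plan is to proceed by induction on $k$. The base case $k=1$ is immediate, since under the stated convention the lexicographical order on $Y_1$ coincides with $\leq_{Y_1}$, which is a well-order by hypothesis. Henceforth assume $k\geq 2$ and that the result holds for products of $k-1$ well-ordered factors.

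First I would verify that $\leq_{\clex}$ is a linear order on $Y_1\times\cdots\times Y_k$. Reflexivity, antisymmetry, and totality all reduce through the recursive definition to the corresponding properties of $\leq_{Y_1}$ together with those of the lex order on $Y_2\times\cdots\times Y_k$ (available by induction). Transitivity is handled by splitting into subcases according to whether the first coordinates coincide or are strictly comparable; one subcase reduces to transitivity of $\leq_{Y_1}$ and the other to transitivity of the tail lex order.

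The crucial point is well-foundedness: every nonempty $S\subseteq Y_1\times\cdots\times Y_k$ must have a least element. I would project $S$ onto the first coordinate to obtain a nonempty subset of $Y_1$, which admits a least element $y_1^\ast$ since $\leq_{Y_1}$ is a well-order. The slice
$$S' := \{(y_2,\ldots,y_k)\mid (y_1^\ast, y_2,\ldots,y_k)\in S\}\subseteq Y_2\times\cdots\times Y_k$$
is then nonempty, and by the inductive hypothesis has a least element $(y_2^\ast,\ldots,y_k^\ast)$. A direct check from the recursive definition shows that $(y_1^\ast,y_2^\ast,\ldots,y_k^\ast)$ is the least element of $S$ under $\leq_{\clex}$: any $(z_1,\ldots,z_k)\in S$ either has $z_1 >_{Y_1} y_1^\ast$, in which case the tuple is strictly larger by the first clause of the definition, or $z_1 = y_1^\ast$, in which case $(z_2,\ldots,z_k)\in S'$ and the minimality of $(y_2^\ast,\ldots,y_k^\ast)$ in $S'$ yields the desired comparison via the second clause.

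There is no serious obstacle here; the statement is a textbook fact. The only point requiring mild care is the inductive bookkeeping, namely recognizing that the "tail" lex order on $Y_2\times\cdots\times Y_k$ supplied by the inductive hypothesis is exactly the relation invoked in the second clause of the recursive definition of $\leq_{\clex}$ on $Y_1\times\cdots\times Y_k$, so that the two occurrences of $\leq_{\clex}$ in the definition are legitimately matched in the induction.
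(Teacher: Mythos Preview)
Your argument is correct and is the standard textbook proof of this fact. Note, however, that the paper does not give its own proof of this lemma at all: it is merely cited from Harzheim's \emph{Ordered Sets}, so there is no in-paper argument to compare against. Your induction on $k$, projecting onto the first coordinate and applying the inductive hypothesis to the fiber, is exactly what one finds in standard references.
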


As a preparation, we expose the following result.

\begin{lemma}
With the above setting, the orders $\dbl$ and $\dbr$ defined in Eqs.~(\mref{eq:ordl}) and~(\mref{eq:ordr}) are well-orders on $\mapm{X}$.
\mlabel{lem:tworder}
\end{lemma}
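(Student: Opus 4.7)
The plan is to factor $\dbl$ through the map $\Phi\colon u\mapsto(\degx(u),\patl(u))$ and inherit the well-ordering property from known well-orderings on the codomain; the argument for $\dbr$ is completely symmetric. Since $u\dbl v$ is defined as the lexicographic comparison of $\Phi(u)$ and $\Phi(v)$, with first coordinates in $\NN$ and second coordinates (when the first coordinates agree with common value $k$) in $(M(X\sqcup\{P,\mu\}))^{k}$, reflexivity, transitivity and totality of $\dbl$ are inherited at once from the corresponding properties of the lexicographic order on the target.

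The crux of antisymmetry is the injectivity of $\Phi$. Because the length of $\patl(u)$ is exactly $\degx(u)$, it suffices to prove that $\patl$ alone is injective on $\mapm{X}$. I would establish this by induction on $\dep(u)$ with a nested induction on the breadth $|u|$: the common leading symbol of all the paths $\pat_u(x_i)$ distinguishes whether $u$ is a single letter of $X$, a single bracket $\lc w'\rc$, or a concatenation $w_1\cdots w_n$ of $n\geq 2$ prime factors. In the bracket case one strips off the leading $P$ from every path and recurses on $w'$. In the concatenation case one partitions the list of paths into consecutive blocks $B_1,\ldots,B_n$ corresponding to $w_1,\ldots,w_n$, the block boundaries being detectable from the shape of the remaining tails of the paths; this is the standard reconstruction of a decorated planar rooted tree from the ordered list of its root-to-leaf paths.

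For well-foundedness, let $\emptyset\neq S\subseteq\mapm{X}$. Since $\degx(S)\subseteq\NN$, it has a minimum $k_0$, and the nonempty set $S':=\{u\in S\mid \degx(u)=k_0\}$ consists of elements strictly $\dbl$-smaller than every element of $S\setminus S'$. On $S'$ the comparison $\dbl$ reduces to the lexicographic comparison $\leq_{\clex}$ of the tuples $\patl(u),\patl(v)\in(M(X\sqcup\{P,\mu\}))^{k_0}$. The assumption $x<P<\mu$ for all $x\in X$ makes $X\sqcup\{P,\mu\}$ well-ordered, so $\dlex$ on $M(X\sqcup\{P,\mu\})$ is a well-order~\mcite{BN}. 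Applying Lemma~\mref{lem:carw} to $k_0$ copies of $(M(X\sqcup\{P,\mu\}),\dlex)$ shows that $\leq_{\clex}$ on $(M(X\sqcup\{P,\mu\}))^{k_0}$ is a well-order, so $S'$, and hence $S$, has a least element.

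The main obstacle will be the injectivity step in the antisymmetry verification. The delicate point is to make precise, in the concatenation case, how the boundaries between consecutive prime factors $w_1,\ldots,w_n$ are recovered from the ordered list of paths alone; once this reconstruction is written down cleanly, the remainder of the proof is routine lexicographic bookkeeping together with the invocation of Lemma~\mref{lem:carw}.
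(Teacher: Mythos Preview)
Your proposal is correct and, for the well-foundedness part, is essentially identical to the paper's argument: first pass to the subset where $\degx$ is minimal, then invoke Lemma~\mref{lem:carw} on the $k$-fold product $(M(X\sqcup\{P,\mu\}),\dlex)^{k}$; the paper phrases this via the descending chain condition while you phrase it via ``every nonempty subset has a least element'', but the content is the same.

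Where you diverge from the paper is in the treatment of antisymmetry. The paper simply asserts that $\dbl$ is a linear order ``by Eq.~(\ref{eq:ordl})'' and moves directly to the descending chain condition, never addressing why $\Phi(u)=\Phi(v)$ forces $u=v$. You correctly isolate this as requiring the injectivity of $\patl$ and sketch the planar-tree reconstruction from ordered root-to-leaf paths. That reconstruction is indeed the nontrivial point (the constraint that prime factors have breadth~$1$, so a $\mu$-node never has a $\mu$-child, is what makes the block boundaries recoverable), and you are right to flag it as the main obstacle. So your approach is the paper's approach together with an explicit treatment of a step the paper leaves implicit; what you gain is an honest proof that $\dbl$ is a partial (hence total) order rather than merely a total preorder.
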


\begin{proof}
We only consider the case of $\dbl$, as the case of $\dbr$ is similar.
Since $\dbl$ is a linear order on $\frakM(X)$ by Eq.~(\mref{eq:ordl}), it suffices to
verify that $\dbl$ satisfies the descending chain condition.

Let $$u_1 \geq_{{\rm PL_{{\rm l}}}} u_2 \geq_{{\rm PL_{{\rm l}}}} \cdots $$
be a given descending chain in $\frakM(X)$. Note that $\deg_{\X}(u_i)\in \ZZ_{\geq 0}$ for $i\geq 1$
and $\ZZ_{\geq 0}$ equipped with the usual order is a well-ordered set. There is an $m\geq 1$
such that $$k:= \deg_{\X}(u_m) =\deg_{\X}(u_{m+1}) =\cdots.$$
Then $\patl(u_i) \in ( M(X\sqcup \{P, \mu \}))^k$ for $i\geq m$.
Since the $(M(X\sqcup \{P, \mu \}), \dlex)$ is a well-ordered set, the $( M(X\sqcup \{P, \mu \}))^k$
is also a well-ordered set by Lemma~\mref{lem:carw}. Thus there is an $n\geq m$ such that
$$\patl(u_n) = \patl(u_{n+1}) = \cdots,$$
and so $$u_n =_{{\rm PL_{{\rm l}}}} =_{{\rm PL_{{\rm l}}}} u_{n+1} =_{{\rm PL_{{\rm l}}}} \cdots.$$
This completes the proof.
\end{proof}

Further we are going to show that $\dbl$ and $\dbr$ are monomial orders.
A well-order $\leq$ on $\mapm{X}$ is called {\bf bracket compatible} (resp. {\bf left
(multiplication) compatible}, resp. {\bf right (multiplication) compatible}) if
$u\leq v$ implies $\blw{u}\leq \blw{v}$, (resp. $wu\leq wv$, resp. $uw\leq vw$, for all $w\in\mapm{X}$).

\begin{lemma}~\mcite{ZGGS}
A well-order $\leq$ is a monomial order if and only if it is bracket compatible, left compatible and right compatible.
\mlabel{lem:ord}
\end{lemma}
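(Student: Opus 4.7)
The forward direction is immediate: if $\leq$ is a monomial order, then taking $q = \lc\star\rc$, $q = w\star$, and $q = \star w$ in the defining implication of Definition~\mref{de:morder} yields bracket compatibility, left compatibility, and right compatibility respectively. So the content lies in the converse.

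For the converse, I plan to fix $u,v \in \frakM(X)$ with $u < v$ and prove $q|_u < q|_v$ for every $q \in \frakM^\star(X)$ by structural induction on $q$. The base case is $q = \star$, where $q|_u = u < v = q|_v$ by hypothesis. For the inductive step, I use the fact that any $q \in \frakM^\star(X)$ other than $\star$ itself falls into one of three mutually exclusive cases determined by how it sits in $\frakM(X^\star) = M(X^\star \cup \lc \frakM(X^\star)\rc)$: either $q = w_1\, q'\, w_2$ where $q' \in \frakM^\star(X)$ is a prime $\star$-bracketed word and $w_1, w_2 \in \frakM(X)$ (with at least one of $w_1, w_2$ nontrivial), or $q$ is itself prime, in which case $q = \lc q''\rc$ for a uniquely determined $q'' \in \frakM^\star(X)$ with strictly smaller depth or breadth.

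In the first case, the induction hypothesis gives $q'|_u < q'|_v$, and successive applications of left and right compatibility yield $w_1\, q'|_u\, w_2 < w_1\, q'|_v\, w_2$, which is exactly $q|_u < q|_v$. In the second case, the induction hypothesis gives $q''|_u < q''|_v$, and bracket compatibility yields $\lc q''|_u\rc < \lc q''|_v\rc$, i.e., $q|_u < q|_v$. Together with the base case, this completes the induction and shows $\leq$ is a monomial order.

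The main thing to be careful about is setting up the structural recursion cleanly: one must verify that every $q \in \frakM^\star(X) \setminus \{\star\}$ admits a unique decomposition of one of the two forms above with the $\star$-containing subword being strictly simpler, so that the induction is well-founded. Since $\star$ occurs exactly once in $q$ by Definition~\mref{def:starbw}, this decomposition follows from the recursive definition of $\frakM(X^\star)$ via $\frakM_n(X^\star) = M(X^\star \cup \lc \frakM_{n-1}(X^\star)\rc)$, and the unique factorization of elements of a free monoid into primes. No genuine obstacle arises beyond bookkeeping.
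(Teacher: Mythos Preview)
Your proof is correct. The forward direction is immediate as you note, and the converse by structural induction on $q \in \frakM^\star(X)$ is the standard argument: the recursive construction $\frakM_n(X^\star) = M\big(X^\star \cup \lc \frakM_{n-1}(X^\star)\rc\big)$ together with unique factorization in free monoids guarantees that any $q \neq \star$ decomposes either as $q = \lc q''\rc$ or as $q = w_1 q' w_2$ with a strictly simpler $\star$-subword, so the induction is well-founded and the three compatibilities suffice.

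Note, however, that the present paper does not supply its own proof of this lemma: it is quoted from~\mcite{ZGGS} as a known result, so there is no in-paper argument to compare against. Your write-up is exactly the argument one would expect in the cited reference.
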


Now we are ready for the main result on orders in this section.

\begin{theorem}
With the above setting, the orders $\dbl$ and $\dbr$ given in Eqs.~(\mref{eq:ordl}) and~(\mref{eq:ordr}) are monomial orders on $\mapm{X}$.
\mlabel{thm:rbord}
\end{theorem}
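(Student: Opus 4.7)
The plan is to invoke Lemma~\mref{lem:ord}, which reduces the monomial-order condition to three compatibilities: bracket, left multiplication, and right multiplication. Since $\dbl$ and $\dbr$ are already well-orders by Lemma~\mref{lem:tworder}, only these three remain. I would prove the claim for $\dbl$ in detail and note that $\dbr$ follows by a symmetric argument obtained by reading letters right-to-left in the definition of $\patr$.

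The crucial preliminaries are the behaviours of $\degx$ and $\patl$ under the algebra operations, which I would extract as two short structural lemmas. First, $\degx(uv)=\degx(u)+\degx(v)$ for all $u,v$, and $\degx(\lc u\rc)=\degx(u)$ whenever $u\ne\bfone$, with the exceptional value $\degx(\lc\bfone\rc)=1$. Second, bracketing any $u\ne\bfone$ prepends the single letter $P$ to every component of $\patl(u)$, while multiplying $u$ on the right (respectively left) by $w\ne\bfone$ appends (respectively prepends) the components of $\patl(w)$ to those of $\patl(u)$, with a leading $\mu$ inserted in front of each inherited component of $\patl(u)$ precisely when $|u|=1$, since that factor then becomes one of several prime factors of a multi-factor product.

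With these facts in hand, each of the three compatibilities would be checked by splitting on whether $\degx(u)<\degx(v)$ or $\degx(u)=\degx(v)$. The first case is immediate from the additivity of $\degx$; the second case requires tracking the first index at which $\patl(u)$ and $\patl(v)$ differ and showing that the transformation preserves the $\dlex$-relation on that component while preserving equality on all preceding components. For bracket compatibility the point is that prepending a single letter $P$ to two $\dlex$-comparable words shifts both lengths by $+1$ and leaves the first differing position unchanged, so the $\dlex$-comparison survives and propagates to $\clex$ on tuples. The bookkeeping for the edge case $u=\bfone$, where $\patl(\lc\bfone\rc)=(P)$, is a small separate check.

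The hard part will be left (and right) compatibility in the subcase where $|u|$ and $|v|$ disagree, because the $\mu$-prepend described above is then applied asymmetrically: a single-prime $u$ has each of its components lengthened by one inside $\patl(wu)$, while a multi-prime $v$ has its components unchanged in $\patl(wv)$. Overcoming this asymmetry is the main technical obstacle; it requires exploiting the chosen order $x<P<\mu$ on $X\sqcup\{P,\mu\}$, which guarantees that any pattern starting with $P$ or a letter of $X$ is $\dlex$-smaller than any pattern of the same length starting with $\mu$, so that the inequality $u<_{\rm PL_l}v$ is preserved after the $\mu$ is inserted at the relevant position. Once all three compatibilities are verified, Lemma~\mref{lem:ord} yields that $\dbl$ is a monomial order on $\mapm{X}$; the parallel argument for $\dbr$ then finishes the theorem.
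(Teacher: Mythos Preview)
Your overall plan coincides with the paper's: reduce via Lemmas~\mref{lem:tworder} and~\mref{lem:ord} to bracket, left, and right compatibility, and treat $\dbr$ by symmetry. You actually see further than the paper does: you correctly isolate the subcase where $|u|$ and $|v|$ differ, so that left (or right) multiplication by some $w\ne\bfone$ prepends an extra $\mu$ to the path-components coming from one of $u,v$ but not the other. The paper's proof sweeps this under the rug by writing $\patl(wu)=(\mu\,\patl(w),\,\mu\,\patl(u))$, a formula that fails whenever $|w|\ge 2$ or $|u|\ge 2$.

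The genuine gap is that this asymmetry cannot be repaired; the step you sketch breaks because $\dlex$ compares word length first, so prepending $\mu$ on one side only can reverse a comparison that was decided at equal length by the letter order $P<\mu$. Concretely, take $u=\lc xy\rc$ (breadth $1$) and $v=\lc x\rc y$ (breadth $2$), both with $\degx=2$. Then $\patl(u)=(P\mu x,\,P\mu y)$ and $\patl(v)=(\mu Px,\,\mu y)$; since $P\mu x$ and $\mu Px$ have the same length and $P<\mu$, we get $u<_{\rm PL_l}v$ (this is the inequality $\lc xy\rc<_{\rm PL_l}\lc x\rc y$ already computed in the paper's example). Now multiply on the left by $w=z$: one finds $\patl(zu)=(\mu z,\,\mu P\mu x,\,\mu P\mu y)$ while $\patl(zv)=(\mu z,\,\mu Px,\,\mu y)$, and in the second slot $|\mu P\mu x|=4>3=|\mu Px|$ forces $\mu P\mu x>_{\rm dlex}\mu Px$, hence $zu>_{\rm PL_l}zv$. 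Thus $\dbl$ is not left compatible, and neither your proposed resolution nor the paper's displayed computation can go through at this step.
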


\begin{proof}
By Lemmas~\mref{lem:tworder} and~\mref{lem:ord}, we are left to show that $\dbl$ and $\dbr$ are bracket compatible, left compatible and right compatible. For this we only consider the case of $\dbl$, as the case of $\dbr$ is similar.
Let $u\dbl v$ with $u,v\in \frakM(X)$  and $w\in \frakM(X)$.

{\bf(Bracket compatible)}
By the definition of $\dbl$, we have
\begin{equation}
\begin{aligned}
\Big(\degx(u), \patl(u)\Big)\leq\Big(\degx(v), \patl(v)\Big)\,  \text{ lexicographically}.
\mlabel{eq:uv}
\end{aligned}
\end{equation}
Further we have
\begin{equation*}
\Big(\degx(\blw{u}), \patl(\blw{u})\Big)=\left\{
    \begin{array}{ll}
      \Big(\degx({u})+1, P\,\patl({u})\Big)=(1, P), & \hbox{if $u=\bfone$;} \\
       \Big(\degx({u}), P\,\patl({u})\Big), & \hbox{if $u\neq \bfone$.}
     \end{array}
 \right.
\end{equation*}
and
\begin{equation*}
\Big(\degx(\blw{v}), \patl(\blw{v})\Big)=\left\{
    \begin{array}{ll}
     \Big(\degx({v})+1, P\,\patl({u})\Big)=(1, P), & \hbox{if $v=\bfone$;} \\
       \Big(\degx({v}), P\,\patl({v})\Big), & \hbox{if $v\neq \bfone$.}
     \end{array}
 \right.
\end{equation*}
By the assumption $u\dbl v$, we have the following three cases to consider.

\noindent{\bf Case 1.} If  $u=v=\bfone$, then $\blw{u}=\blw{v}=\blw{\bfone}$ and the result holds.

\noindent{\bf Case 2.} If  $u=\bfone$ and $v\neq\bfone$, then $\degx({\lc v\rc}) = \degx({v})\geq1$ and so
\begin{align*}
 \Big(\degx(\blw{u}), \patl(\blw{u})\Big)=(1, P)<\Big(\degx(\blw{v}), \patl(\blw{v})\Big)\,\text{ lexicographically}.
\end{align*}
Thus $\blw{u}\dbl \blw{v}$.

\noindent{\bf Case 3.} If $u\neq\bfone\,\text{ and }\,v\neq\bfone$, then by Eq.~\meqref{eq:uv},
\begin{align*}
&\Big(\degx(\blw{u}), \patl(\blw{u})\Big)=\Big(\degx({u}), P\,\patl({u})\Big)\\
\leq&\Big(\degx({v}), P\,\patl({v})\Big)= \Big(\degx(\blw{v}), \patl(\blw{v})\Big)
\,\text{ lexicographically},
\end{align*}
and so $\blw{u}\dbl \blw{v}$.

{\bf (Left compatible)} By Eq.~\meqref{eq:uv},  we have
\begin{align*}
&\Big(\degx(wu), \patl(wu)\Big)=\Big(\degx(w)+\degx(u), \mu\patl(w),\mu\patl(u)\Big)\\
\leq&\ \Big(\degx(w)+\degx(v), \mu\patl(w),\mu\patl(v)\Big) = \Big(\degx(wv), \patl(wv)\Big)\,\text{ lexicographically},
\end{align*}
so $wu \dbl wv.$

{\bf (Right compatible)} It follows from Eq.~\meqref{eq:uv} that
\begin{align*}
&\Big(\degx( uw), \patl(uw)\Big)=\Big(\degx(u)+\degx(w), \mu\patl(u),\mu\patl(w)\Big)\\
\leq&\ \Big(\degx(v)+\degx(w), \mu\patl(v),\mu\patl(w)\Big)= \Big(\degx( vw), \patl(vw)\Big)\,\text{ lexicographically},
\end{align*}
Thus $uw \dbl vw$. This completes the proof.
\end{proof}

\subsection{New bases of free weighted differential algebras}
The following is the concept of a weighted differential algebra, which is a generalization of the classical differential algebra and difference algebra.

\begin{defn}\cite{GK}
Let $\lambda\in \bfk$ be a fixed element. A {\bf differential algebra of weight $\lambda$} is an algebra $A$ together with a linear operator $D$: $A\longrightarrow A$ such that $D(1)=0$ and
\begin{equation}
D(xy)=D(x)y+xD(y)+\lambda D(x)D(y), \,\text{ for }\, x,y \in A.
\mlabel{eq:wdiff}
\end{equation}
\end{defn}

Now we arrive at our first main result in this section, concerning the case of weight zero.

\begin{theorem} Let $(X, \leq)$ be a well-ordered set and $\dbl$ the monomial order on $\frakM(X)$ defined in Eq.~\meqref{eq:ordl}.
\begin{enumerate}
\item The set
$$S_{\rm diff_{0, l}} := \Big\{\blw{x}y-\blw{xy}+x\blw{y}  \mid x,y\in \mapm{X}\Big\}$$
is a Gr\"{o}bner-Shirshov basis in $\bfk\mapm{X}$. \mlabel{it:diff01}

\item The set $${\rm Irr}(S_{\rm diff_{0, l}}):= \Big\{\frakM(X)\setminus \{q|_{\blw{x}y} \mid x,y\in \frakM(X),q \in \frakM^\star(X)\}\Big\}$$
is a \bfk-basis of the free differential algebra $\bfk\frakM(X)/\Id(S_{\rm diff_{0, l}})$ of weight zero. \mlabel{it:diff02}
\end{enumerate}
\mlabel{thm:gsbd0}
\end{theorem}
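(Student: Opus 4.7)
The plan is to apply the Composition-Diamond Lemma (Lemma~\mref{lem:cdl}): part~\mref{it:diff02} will follow from part~\mref{it:diff01} once we identify $\bfk\mapm{X}/\Id(S_{\rm diff_{0, l}})$ with the free differential algebra of weight zero on $X$. The latter holds because the relations $\blw{x}y-\blw{xy}+x\blw{y}$ for $x,y\in\mapm{X}\setminus\{\bfone\}$ are exactly the weighted Leibniz rule at weight $0$, while the degenerate case $x=y=\bfone$ contributes $\blw{\bfone}$ to the ideal, encoding $D(1)=0$. Thus the problem reduces to proving that $S_{\rm diff_{0, l}}$ is a Gr\"{o}bner-Shirshov basis with respect to $\dbl$.

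First I would verify that $S_{\rm diff_{0, l}}$ is monic, with leading monomial $\blw{x}y$ for each $f_{x,y}:=\blw{x}y-\blw{xy}+x\blw{y}$ (in the degenerate cases with $x$ or $y$ equal to $\bfone$ the leading monomial is still of the form $q|_{\blw{a}b}$ for suitable $a,b\in\mapm{X}$ and $q\in\frakM^{\star}(X)$, matching the pattern in~\mref{it:diff02}). The point is that under $\dbl$ the first entry of $\patl(\blw{x}y)$ begins with $\mu P\ldots$ (outer product $\mu$, then bracket $P$), while the corresponding entry for $\blw{xy}$ begins $P\mu\ldots$ and for $x\blw{y}$ begins $\mu\ldots$ without an immediate $P$. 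Since $\mu>P>$ every element of $X$, the $\mu P\ldots$ entry dominates under $\dlex$, giving $\overline{f_{x,y}}=\blw{x}y$.

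The substantive work is to verify the composition condition. For any pair $f_{a,b},f_{c,d}\in S_{\rm diff_{0, l}}$ I would enumerate the intersection and including compositions. An intersection composition arises exactly when $b$ contains the prime $\blw{c}$ at some position followed by a prefix of $d$: writing $b=b_0\,\blw{c}\,d_0$ and $d=d_0\,u$ with $u\neq\bfone$, the overlap gives $w=\blw{a}b_0\blw{c}d$ and
\[
(f_{a,b},f_{c,d})_w = f_{a,b}\cdot u - \blw{a}b_0\cdot f_{c,d}.
\]
Including compositions split into four subcases according to the location of $\blw{c}d$ inside $\blw{a}b$: (i) strictly inside $\blw{a}$, so $a=r|_{\blw{c}d}$ for some $r\in\frakM^{\star}(X)$; (ii) inside a single prime factor of $b$; (iii) spanning consecutive primes of $b$; or (iv) occurring as a prefix, forcing $c=a$ and $d$ to be a proper prefix of $b$.

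In each case I would reduce the composition to zero by iterated use of the congruence $\blw{xy}\equiv\blw{x}y+x\blw{y}\pmod{S_{\rm diff_{0, l}}}$, spreading the differential across products and cancelling term-by-term. The main obstacle will be careful bookkeeping when $b_0$, $d_0$, $a$, or $r$ are themselves nontrivial bracketed words: iterated Leibniz produces long alternating sums indexed by the primes of the product, and at each step one must identify the reduction as plugging an explicit element of $S_{\rm diff_{0, l}}$ into a context $q'\in\frakM^{\star}(X)$ whose overall leading monomial under $\dbl$ is strictly less than $w$. Once all compositions are shown trivial modulo $(S_{\rm diff_{0, l}},w)$, Lemma~\mref{lem:cdl} delivers both conclusions at once: $S_{\rm diff_{0, l}}$ is a Gr\"{o}bner-Shirshov basis and $\irr(S_{\rm diff_{0, l}})$ is a $\bfk$-basis of $\bfk\mapm{X}/\Id(S_{\rm diff_{0, l}})$, which is the free differential algebra of weight zero on $X$.
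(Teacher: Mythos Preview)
Your plan is exactly the paper's: identify $\overline{f_{x,y}}=\lfloor x\rfloor y$ under $\leq_{\mathrm{PL}_{\mathrm l}}$, list all composition ambiguities, reduce each to zero, and invoke Lemma~\ref{lem:cdl}. The difference is only in bookkeeping. The paper compresses the entire case analysis into three ambiguity patterns --- one intersection overlap $w_1=\lfloor x\rfloor\lfloor y\rfloor z$ coming from the pair $\bigl(f(x,\lfloor y\rfloor),\,f(y,z)\bigr)$, and two including overlaps $w_2=\lfloor\,q|_{\lfloor x\rfloor y}\,\rfloor z$ and $w_3=\lfloor x\rfloor\,q|_{\lfloor y\rfloor z}$ with $q\in\frakM^\star(X)$ arbitrary --- and then dispatches each by a short explicit calculation (substitute the inner relation, expand, cancel). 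Your including subcases (i)--(iii) are absorbed by the paper's $w_2$ and $w_3$ once $q$ is allowed to be general, so no separate split is needed there. On the other hand your subcase (iv) (the inner leading monomial $\lfloor c\rfloor d$ occurring as a \emph{prefix} of $\lfloor a\rfloor b$) and your longer intersection overlaps $w=\lfloor a\rfloor b_0\lfloor c\rfloor d$ with $b_0\neq\bfone$ are genuine ambiguities that the paper's list omits, so your enumeration is in fact the more scrupulous of the two; these extra compositions differ from ones already on the paper's list by expressions of the shape $f(a,Bu)-f(a,B)\,u$, which are themselves $\bfk$-combinations of substituted relations. For the actual reductions you should abandon the open-ended ``iterated Leibniz with long alternating sums'' and instead imitate the paper's pattern: for each $w_i$ write the composition out, replace every surviving occurrence of a leading pattern by the corresponding lower terms in one pass, and observe that everything cancels in two or three lines.
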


\begin{proof}
\mref{it:diff01} Note that we can replace $S_{\rm diff_{0, l}}$ by the following set
$$ S_{\rm diff_{0, l}} = \Big\{\blw{x}y-\blw{xy}+x\blw{y} , \blw{\bfone} \mid x,y\in \mapm{X}\setminus \{{\bfone}\}\Big\}.$$
Since compositions involving $\blw{\bfone}$ are trivial, it suffices to consider
compositions not involving $\blw{\bfone}$.
Denote by
$$f(x,y) = \blw{x}y-\blw{xy}+x\blw{y}\in S_{\rm diff_{0, l}}, \,\text{ where }\, x,y\in \mapm{X}\setminus \{{\bfone}\}.$$
By the definition of $\dbl$, we have the leading monomial
$$\lbar{f(x,y)} = \blw{x}y.$$
The ambiguities of all possible intersection compositions are
$$w_1=\blw{x}\blw{y}z, \text{ where } x,y,z \in \mapm{X}\setminus \{{\bfone}\}, $$
and the ambiguities of all possible including compositions are
$$w_2=\blw{q\suba{\blw{x}y}}z\,\text{ and }\, w_3=\blw{x}q\suba{\blw{y}z},  \text{ where } x,y,z \in \mapm{X}\setminus \{{\bfone}\}\,\text{ and }\, q \in \frakM^\star(X).$$
For the ambiguity $w_1$, it is from an intersection composition
$$\Big(f(x,\blw{y}), f(y,z)\Big)_{w_1},$$
which is trivial by
\begin{align*}
&\Big(f(x,\blw{y}), f(y,z)\Big)_{w_1}\\
=&~ \big(\blw{x}\blw{y}-\blw{x\blw{y}}+x\blw{\blw{y}}\big)z-\blw{x}\big(\blw{y}z-\blw{yz}+y\blw{z}\big)\\
=&~-\blw{x\blw{y}}z+x\blw{\blw{y}}z+\blw{x}\blw{yz}-\blw{x}y\blw{z}\\
\equiv&~ -\blw{x\blw{y}z}+x\blw{y}\blw{z}+x\blw{\blw{y}z}-x\blw{y}\blw{z}+\blw{x\blw{yz}}-x\blw{\blw{yz}}-\blw{xy\blw{z}}+x\blw{y\blw{z}}\\
\equiv&~ -\blw{x\blw{y}z}+x\blw{\blw{y}z}+\blw{x\blw{yz}}-x\blw{\blw{yz}}-\blw{xy\blw{z}}+x\blw{y\blw{z}}\\
\equiv&~ \blw{-x\blw{yz}+xy\blw{z}}
+x\blw{\blw{yz}-y\blw{z}}
+\blw{x\blw{yz}}-x\blw{\blw{yz}}-\blw{xy\blw{z}}+x\blw{y\blw{z}}\\
\equiv&~0 \mod(S_{\rm diff_{0, l}}, w_1).
\end{align*}

Further the ambiguity $w_2$ is from an including composition
$$\Big(f(q\suba{\blw{x}y},z), f(x,y)\Big)_{w_2}.$$
Its triviality follows from
\begin{align*}
&\Big(f(q\suba{\blw{x}y},z), f(x,y)\Big)_{w_2}\\
=&~\Big(\blw{q\suba{\blw{x}y}}z-\blw{q\suba{\blw{x}y}z}+q\suba{\blw{x}y}\blw{z}\Big)
-\Big(\blw{q\suba{\blw{x}y-\blw{xy}+x\blw{y}}}z\Big)\\
=&~-\blw{q\suba{\blw{x}y}z}+q\suba{\blw{x}y}\blw{z}
+\blw{q\suba{\blw{xy}}}z-\blw{q\suba{x\blw{y}}}z\\
\equiv&~-\blw{q\suba{\blw{xy}-x\blw{y}}z}+q\suba{\blw{xy}-x\blw{y}}\blw{z}
+\blw{q\suba{\blw{xy}}z}-q\suba{\blw{xy}}\blw{z}-\blw{q\suba{x\blw{y}}z}+q\suba{x\blw{y}}\blw{z}\\
\equiv&~0 \mod(S_{\rm diff_{0, l}}, w_2).
\end{align*}

Finally the triviality of the composition from the ambiguity $w_3$ can be shown as
\begin{align*}
&\Big(f(x, q\suba{\blw{y}z}), f(y,z)\Big)_{w_3}\\
=&~\Big(\blw{x}q\suba{\blw{y}z}-\blw{xq\suba{\blw{y}z}}+x\blw{q\suba{\blw{y}z}}\Big)
-\Big(\blw{x}q\suba{\blw{y}z-\blw{yz}+y\blw{z}}\Big)\\
=&~-\blw{xq\suba{\blw{y}z}}+x\blw{q\suba{\blw{y}z}}
+\blw{x}q\suba{\blw{yz}}- \blw{x}q\suba{{y\blw{z}}}\\
\equiv&~-\blw{xq\suba{\blw{yz}-y\blw{z}}}+x\blw{q\suba{\blw{yz}-y\blw{z}}}
+\blw{xq\suba{\blw{yz}}}-x\blw{q\suba{\blw{yz}}}-\blw{xq\suba{{y\blw{z}}}}+x\blw{q\suba{{y\blw{z}}}}\\
\equiv&~0 \mod(S_{\rm diff_{0, l}}, w_3).
\end{align*}
Thus $S_{\rm diff_{0, l}}$ is a Gr\"{o}bner-Shirshov basis in $\bfk\mapm{X}$.

\mref{it:diff02} It follows from Lemma~\mref{lem:cdl} and Item~\mref{it:diff1}.
\end{proof}

Replacing the monomial $\dbl$ utilized in Theorem~\mref{thm:gsbd0} by $\dbr$, we have the following result.

\begin{theorem} Let $(X, \leq)$ be a well-ordered set and $\dbr$ the monomial order on $\frakM(X)$ defined in Eq.~\meqref{eq:ordr}.
\begin{enumerate}
\item The set
$$S_{\rm diff_{0, r}} := \Big\{x\blw{y} - \blw{xy}+ \blw{x} y  \mid x,y\in \mapm{X}\Big\}$$
is a Gr\"{o}bner-Shirshov basis in $\bfk\mapm{X}$. \mlabel{it:diff01}

\item The set $${\rm Irr}(S_{\rm diff_{0, r}}):= \Big\{\frakM(X)\setminus \{q|_{x\blw{y}} \mid x,y\in \frakM(X),q \in \frakM^\star(X)\}\Big\}$$
is a \bfk-basis of the free differential algebra $\bfk\frakM(X)/\Id(S_{\rm diff_{0, r}})$ of weight zero. \mlabel{it:diff02}
\end{enumerate}
\mlabel{thm:gsbd0l}
\end{theorem}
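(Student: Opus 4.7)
The plan is to mirror the proof of Theorem~\ref{thm:gsbd0} under the symmetry $\dbl \leftrightarrow \dbr$ and $\blw{x}y \leftrightarrow x\blw{y}$. First, the example in Section~3.1 establishes $v \dbrn w \dbrn u$ for $u=\blw{x}y$, $w=\blw{xy}$, $v=x\blw{y}$, so under $\dbr$ the polynomial $f(x,y):= x\blw{y}-\blw{xy}+\blw{x}y$ has leading monomial $x\blw{y}$. Following the setup of Theorem~\ref{thm:gsbd0}, I would replace $S_{\rm diff_{0, r}}$ by the equivalent monic generating set
\[
\bigl\{\, x\blw{y}-\blw{xy}+\blw{x}y,\ \blw{\bfone}\ \bigm|\ x,y\in\mapm{X}\setminus\{\bfone\}\,\bigr\},
\]
since the degenerate specialisations $x=\bfone$ and $y=\bfone$ produce $\blw{\bfone}y$ and $x\blw{\bfone}$, both already annihilated by $\blw{\bfone}\in S$, and every composition involving $\blw{\bfone}$ is trivial.

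I would then enumerate the ambiguities. Since $\lbar{f(x,y)} = x\blw{y}$ has breadth~$2$, the width constraint for intersections forces $|w|=3$, and matching primes yields a unique intersection ambiguity $w_1 = x\blw{y}\blw{z}$, arising from $\bigl(f(x,y),\,f(\blw{y},z)\bigr)^{\blw{z},\,x}_{w_1}$; the two including ambiguities are $w_2 = q|_{x\blw{y}}\blw{z}$ (the inner leading monomial lies in the left factor of an outer $x'\blw{y'}$) and $w_3 = x\blw{q|_{y\blw{z}}}$ (the inner leading monomial lies inside the bracket), for $x,y,z \in \mapm{X}\setminus\{\bfone\}$ and $q \in \frakM^{\star}(X)$. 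For each composition I would expand the two reductions and then apply the rewrite $x\blw{y}\equiv\blw{xy}-\blw{x}y \pmod{S_{\rm diff_{0, r}}}$ to every surviving occurrence; the resulting telescope is the mirror image of the one in the proof of Theorem~\ref{thm:gsbd0}. Triviality modulo $(S,w_i)$ then establishes that the set is a Gr\"obner--Shirshov basis, and Lemma~\ref{lem:cdl} simultaneously delivers both \ref{it:diff01} and \ref{it:diff02}.

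The main obstacle is checking, at each stage of every telescope, that the intermediate monomials $q|_{\lbar{s}}$ being subtracted really do lie strictly below $w_i$ in the $\dbr$-order. Each rewrite drives the pair $x\blw{y}$ either further inside a bracket (as in $\blw{xy}\blw{z}$ or $x\blw{\blw{y}z}$ appearing in the $w_1$-reduction) or across the bracket (as in $\blw{x}y\blw{z}$ or $x\blw{\blw{y}}z$), so the bracket-, left-, and right-compatibility of $\dbr$ proved in Theorem~\ref{thm:rbord} must be invoked repeatedly in order to keep track of the $\patr$-tuples of these monomials. With that bookkeeping in hand, the cancellations mirror those of Theorem~\ref{thm:gsbd0} term by term, and the proof completes.
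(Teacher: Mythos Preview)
Your proposal is correct and follows exactly the route the paper takes: its proof of Theorem~\ref{thm:gsbd0l} reads in full ``It is similar to the proof of Theorem~\ref{thm:gsbd0},'' and you have spelled out precisely that mirror argument, with the correctly dualised ambiguities $w_1 = x\blw{y}\blw{z}$, $w_2 = q|_{x\blw{y}}\blw{z}$, $w_3 = x\blw{q|_{y\blw{z}}}$ and the same telescoping reductions.
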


\begin{proof}
It is similar to the proof of Theorem~\mref{thm:gsbd0}.
\end{proof}

Next we move to the case of differential algebra of nonzero weight.

\begin{remark}
If $\lambda\neq 0$, the term-rewriting system
$$\{  \blw{x}y  \to \blw{xy}- x\blw{y} -\lambda \blw{x} \blw{y} \mid x,y \in\mapm{X}\}\,\text{ or }\,
\{ x\blw{y}  \to \blw{xy}-  \blw{x}y  -\lambda \blw{x} \blw{y} \mid x,y \in\mapm{X}\} $$
on $\bfk\frakM(X)$ is not terminating, since $\blw{x} \blw{y}$ on the right hand side can be rewritten again.
\mlabel{rk:notterm}
\end{remark}

Remark~\mref{rk:notterm} sheds a light on taking $\blw{x} \blw{y}$ as the leading monomial,
which can be realized by the monomial order $\dbl$ or $\dbr$.

\begin{theorem} Let $\lambda$ be a fixed invertible element in $\bfk$.
Let $X$ be a well-ordered set and $\dbl$ or $\dbr$  the monomial order on $\mapm{X}$  given in Eqs.~\meqref{eq:ordl} and~\meqref{eq:ordr}.
\begin{enumerate}
\item The set
$$S_{\rm diff_\lambda} = \Big\{\blw{x}\blw{y}-\frac{1}{\lambda}\blw{xy}+\frac{1}{\lambda}\blw{x}y+\frac{1}{\lambda}x\blw{y},\, \blw{\bfone} \mid x,y\in \mapm{X}\setminus \{\bfone\}\Big\}$$
is a Gr\"{o}bner-Shirshov basis in $\bfk\mapm{X}$. \mlabel{it:diff1}

\item The set
$${\rm Irr}(S_{\rm diff_\lambda}):= \Big\{\frakM(X)\setminus \{q|_{\blw{x}\blw{y}} , q|_{\blw{\bfone}}\mid x,y\in \frakM(X)\setminus \{\bfone \}, q \in \frakM^\star(X)\}\Big\}$$
 is a \bfk-basis of the free differential algebra $\bfk\frakM(X)/\Id(S_{\rm diff_\lambda})$ of weight $\lambda$. \mlabel{it:diff2}
\end{enumerate}
\mlabel{thm:gsbd}
\end{theorem}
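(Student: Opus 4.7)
My plan is to apply the Composition--Diamond lemma (Lemma~\mref{lem:cdl}), which reduces both items of the theorem to verifying that every composition of elements of $S_{\rm diff_\lambda}$ is trivial modulo $(S_{\rm diff_\lambda},w)$: item~\mref{it:diff2} then follows at once from item~\mref{it:diff1} together with the direct sum decomposition $\bfk\mapm{X}=\bfk\irr(S_{\rm diff_\lambda})\oplus\Id(S_{\rm diff_\lambda})$ and the universal property identifying $\bfk\mapm{X}/\Id(S_{\rm diff_\lambda})$ with the free weighted differential algebra on $X$. The cases of $\dbl$ and $\dbr$ are symmetric so I work with $\dbl$ only. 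Throughout, invertibility of $\lambda$ is what makes $f(u,v):=\blw{u}\blw{v}-\tfrac1\lambda\blw{uv}+\tfrac1\lambda\blw{u}v+\tfrac1\lambda u\blw{v}$ a monic element of $\bfk\mapm{X}$.

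\textbf{Leading monomials.} First I confirm $\lbar{f(x,y)}=\blw{x}\blw{y}$. In $\blw{x}\blw{y}$ the top-level breadth is $2$ and each letter lies inside one of the two top-level brackets, so its $\patl$-entry begins with $\mu P\cdots$. In $\blw{xy}$ the top-level breadth is $1$, and since $|x|,|y|\geq 1$ the interior has breadth $\geq 2$, forcing every entry to begin $P\mu\cdots$. In $\blw{x}y$ the letters of $y$ have entries beginning $\mu\cdots$ with no leading $P$ (and symmetrically in $x\blw{y}$). Because the well-order on $X\sqcup\{P,\mu\}$ satisfies $\mu>P>a$ for all $a\in X$, a lexicographic comparison under $\dlex$ shows that the $\patl$-tuple of $\blw{x}\blw{y}$ strictly dominates those of the other three monomials, with a short induction on $\dep(x)+\dep(y)$ handling the nested cases (the key point being that the deeper $P$-layers added by the outer bracketing always produce either a longer word or a larger letter-by-letter entry). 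Clearly $\lbar{\blw{\bfone}}=\blw{\bfone}$.

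\textbf{Enumeration of compositions.} The breadth inequality $\max\{|\lbar f|,|\lbar g|\}<|w|<|\lbar f|+|\lbar g|$ rules out every intersection composition involving the prime $\blw{\bfone}$, and the only including compositions involving $\blw{\bfone}$ are those in which $\blw{\bfone}$ sits as a nested sub-bracketed word of $\blw{x}\blw{y}$ (necessarily inside $x$ or $y$, since $x,y\neq\bfone$). For two copies of the main relation, the breadth-$2$ prime decomposition of $\blw{x}\blw{y}$ yields exactly three families: the intersection ambiguity $w_1=\blw{x}\blw{y}\blw{z}$ arising from the overlap at the middle bracket; and two including ambiguities in which the leading monomial $\blw{u}\blw{v}$ of one copy sits as a pair of consecutive prime factors inside the first outer bracket of $\blw{x'}\blw{y'}$ (giving $w_2$) or inside the second (giving $w_3$).

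\textbf{Verification and main obstacle.} For $w_1$, direct expansion gives
\[(f(x,y),f(y,z))_{w_1}=f(x,y)\blw{z}-\blw{x}f(y,z)=-\tfrac1\lambda\blw{xy}\blw{z}+\tfrac1\lambda x\blw{y}\blw{z}+\tfrac1\lambda\blw{x}\blw{yz}-\tfrac1\lambda\blw{x}\blw{y}z.\]
Each of these four remaining terms contains a factor $\blw{u}\blw{v}$ at an identified interior position (the outer factor of the first and third, a sub-product of the second and fourth), and rewriting that factor via $f(u,v)\equiv 0$ produces twelve $\pm\tfrac1{\lambda^2}$-summands supported on the six monomials $\blw{xyz},\blw{xy}z,xy\blw{z},x\blw{yz},\blw{x}yz,x\blw{y}z$; a coefficient check confirms that each of these six appears with total coefficient zero, establishing triviality. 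The including compositions at $w_2$ and $w_3$ reduce in exactly the same spirit: after substitution each residual term still carries a $\blw{u}\blw{v}$ factor at the inner position where $\blw{u}\blw{v}$ originally sat, so an analogous pairwise cancellation closes the calculation. The $\blw{\bfone}$-composition is essentially automatic, since after subtraction every surviving summand still contains a nested $\blw{\bfone}$ and hence lies in $q'|_{\blw{\bfone}}$ for some $q'\in\mapm^\star(X)$ strictly below the ambiguity in the order. The main obstacle---really the only nontrivial bookkeeping---is organizing the signs and positions in the $w_1$ cancellation, which is the unique place where the rewrite has to be applied twice in succession to produce the $\lambda^{-2}$-coefficients. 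Once these routine verifications are complete, Lemma~\mref{lem:cdl} delivers both items of the theorem.
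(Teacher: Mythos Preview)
Your proposal is correct and follows essentially the same approach as the paper: identify $\lbar{f(x,y)}=\blw{x}\blw{y}$, enumerate the ambiguities $w_1=\blw{x}\blw{y}\blw{z}$, $w_2=\blw{q|_{\blw{x}\blw{y}}}\blw{z}$, $w_3=\blw{x}\blw{q|_{\blw{y}\blw{z}}}$, verify triviality by the same explicit rewriting (your $w_1$ computation matches the paper's line-for-line), and then invoke the Composition--Diamond lemma. The only minor differences are cosmetic: you supply more detail on the leading-monomial comparison and on the $\blw{\bfone}$-compositions (both of which the paper dismisses in a phrase), while the paper writes out the $w_2$ and $w_3$ reductions explicitly where you summarize them; your phrasing ``a pair of consecutive prime factors'' for the including ambiguities should be read as ``nested via an arbitrary $q\in\frakM^\star(X)$,'' and your justification of $\lbar{f(x,y)}=\blw{x}\blw{y}$ is slightly imprecise (the decisive point is that the relevant $\pat$-entries in $\blw{x}\blw{y}$ are strictly longer, hence larger under $\dlex$, not merely that they begin $\mu P$), but none of this affects the validity of the argument.
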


\begin{proof}
\mref{it:diff1}
Notice that the composition involving $\blw{\bfone}$ are trivial.
It suffices to consider compositions not involving $\blw{\bfone}$.
Denote by
$$f(x,y)=\blw{x}\blw{y}-\frac{1}{\lambda}\blw{xy}+\frac{1}{\lambda}\blw{x}y+\frac{1}{\lambda}x\blw{y}\in S_{\rm diff_\lambda} .$$
For both $\dbl$ and $\dbr$, we have the leading monomial
$$\lbar{f(x,y)} = \blw{x}\blw{y}.$$
The ambiguities of all possible intersection compositions are
$$w_1=\blw{x}\blw{y}\blw{z},\text{ where }\, x,y,z \in \mapm{X}\setminus \{\bfone\},$$
and the ambiguities of all possible including compositions are
$$w_2=\blw{q\suba{\blw{x}\blw{y}}}\blw{z}\,\text{ and }\, w_3=\blw{x}\blw{q\suba{\blw{y}\blw{z}}}, \text{ where }\, x,y,z \in \mapm{X}\setminus \{\bfone\}\,\text{ and }\, q \in \frakM^\star(X).$$
The ambiguity $w_1$ is from an intersection composition
$$\Big(f(x,y), f(y,z)\Big)_{w_1},$$
whose triviality follows from
\begin{align*}
 &\Big(f(x,y), f(y,z)\Big)_{w_1}\\
=&~\Big(\blw{x}\blw{y}-\frac{1}{\lambda}\blw{xy}+\frac{1}{\lambda}\blw{x}y+\frac{1}{\lambda}x\blw{y}\Big)\blw{z}
-\blw{x}\Big(\blw{y}\blw{z}-\frac{1}{\lambda}\blw{yz}+\frac{1}{\lambda}\blw{y}z+\frac{1}{\lambda}y\blw{z}\Big)\\
=&~\frac{1}{\lambda}\Big(-\blw{xy}\blw{z}+x\blw{y}\blw{z}
+\blw{x}\blw{yz}-\blw{x}\blw{y}z\Big)\\
\equiv&~\frac{1}{\lambda}\Big(-\frac{1}{\lambda}\blw{xyz}+\frac{1}{\lambda}\blw{xy}z+\frac{1}{\lambda}xy\blw{z}   + \frac{1}{\lambda}x\blw{yz}-\frac{1}{\lambda}x\blw{y}z-\frac{1}{\lambda}xy\blw{z}\\
&+\frac{1}{\lambda}\blw{xyz}-\frac{1}{\lambda}\blw{x}yz-\frac{1}{\lambda}x\blw{yz}  -\frac{1}{\lambda}\blw{xy}z+\frac{1}{\lambda}\blw{x}yz+\frac{1}{\lambda}x\blw{y}z
\Big)\\
\equiv&~0 \mod(S_{\rm diff_\lambda} , w_1).
\end{align*}
The ambiguity $w_2$ is from an including composition
$$\Big(f(q\suba{\blw{x}\blw{y}},z), f(x,y)\Big)_{w_2},$$
which is trivial by
\begin{align*}
&\Big(f(q\suba{\blw{x}\blw{y}},z), f(x,y)\Big)_{w_2}\\
=&~\Big(\blw{q\suba{\blw{x}\blw{y}}}\blw{z}-\frac{1}{\lambda}\blw{q\suba{\blw{x}\blw{y}}z}+\frac{1}{\lambda}\blw{q\suba{\blw{x}\blw{y}}}z+\frac{1}{\lambda}q\suba{\blw{x}\blw{y}}\blw{z}\Big)
-\Big(\blw{q\suba{\blw{x}\blw{y}-\frac{1}{\lambda}\blw{xy}+\frac{1}{\lambda}\blw{x}y+\frac{1}{\lambda}x\blw{y}}}\blw{z}\Big)\\
=&~\frac{1}{\lambda}\Big(-\blw{q\suba{\blw{x}\blw{y}}z}+\blw{q\suba{\blw{x}\blw{y}}}z+q\suba{\blw{x}\blw{y}}\blw{z}
+\blw{q\suba{\blw{xy}}}\blw{z}-\blw{q\suba{\blw{x}{y}}}\blw{z}-\blw{q\suba{{x}\blw{y}}}\blw{z}\Big)\\
\equiv&~\frac{1}{\lambda}\Big(
\blw{q\suba{-\frac{1}{\lambda}\blw{xy}+\frac{1}{\lambda}\blw{x}y+\frac{1}{\lambda}x\blw{y}}z}
+\blw{q\suba{\frac{1}{\lambda}\blw{xy}-\frac{1}{\lambda}\blw{x}y-\frac{1}{\lambda}x\blw{y}}}z
+q\suba{\frac{1}{\lambda}\blw{xy}-\frac{1}{\lambda}\blw{x}y-\frac{1}{\lambda}x\blw{y}}\blw{z}\\
&+\frac{1}{\lambda}\blw{q\suba{\blw{xy}}z}-\frac{1}{\lambda}\blw{q\suba{\blw{xy}}}z-\frac{1}{\lambda}q\suba{\blw{xy}}\blw{z}
-\frac{1}{\lambda}\blw{q\suba{\blw{x}{y}}z}+\frac{1}{\lambda}\blw{q\suba{\blw{x}{y}}}z+\frac{1}{\lambda}q\suba{\blw{x}{y}}\blw{z}\\
&-\frac{1}{\lambda}\blw{q\suba{{x}\blw{y}}z}+\frac{1}{\lambda}\blw{q\suba{{x}\blw{y}}}z+\frac{1}{\lambda}q\suba{{x}\blw{y}}\blw{z}\Big)\\
\equiv&~0 \mod(S_{\rm diff_\lambda} , w_2).
\end{align*}
The case of the ambiguity $w_3$ is similar to the case of $w_2$.
Thus $S_{\rm diff_\lambda}$ is a Gr\"{o}bner-Shirshov basis in $\bfk\mapm{X}$.

\mref{it:diff2} Notice that Eq.~(\mref{eq:wdiff}) is trivial if $x=1$ or $y=1$.
Then the result follows from Lemma~\mref{lem:cdl} and Item~\mref{it:diff1}.
\end{proof}

\subsection{New bases of free weighted modified differential algebras}
The following is the concept of a weighted modified differential algebra.

\begin{defn}\mcite{PZGL}
Let $\lambda\in \bfk$ be a fixed constant. A {\bf modified differential algebra of weight $\lambda$} is an algebra $A$ together with a linear operator $D$: $A\longrightarrow A$ such that
\begin{equation}
D(xy)=D(x)y+xD(y)+\lambda xy, \,\text{ for }\, x,y \in A.
\mlabel{eq:mdiff}
\end{equation}
\end{defn}

\begin{remark}
\begin{enumerate}
\item It follows from Eq.~(\mref{eq:mdiff}) that $D(1) = -\lambda$.

\item The set
\begin{equation}
\{\blw{xy}- x\blw{y} -  \blw{x} y -\lambda xy \mid x,y\in \mapm{X}\}
\mlabel{eq:mdiff0}
 \end{equation}
is a Gr\"{o}bner-Shirshov basis in $\bfk\mapm{X}$~\cite[Lemma 5.3]{GSZ} with respect to the leading monomial $\lc xy\rc$.
\end{enumerate}
\end{remark}

We next prove that the set in Eq.~(\mref{eq:mdiff0})
with respect to the leading monomial $\blw{x}{y}$ or $x\blw{y}$ is also a Gr\"{o}bner-Shirshov basis in $\bfk\mapm{X}$.

\begin{theorem}
Let $(X, \leq)$ be a well-ordered set and $\dbl$ the monomial order on $\frakM(X)$ defined in Eq.~\meqref{eq:ordl}.
\begin{enumerate}
\item The set
$$S_{\rm mdiffl} := \Big\{\blw{x}y-\blw{xy}+x\blw{y}+\lambda xy   \mid x,y\in \mapm{X}\Big\}$$
is a Gr\"{o}bner-Shirshov basis in $\bfk\mapm{X}$. \mlabel{it:mdiff01r}

\item The set $${\rm Irr}(S_{\rm mdiffl}):= \Big\{\frakM(X)\setminus \{q|_{\blw{x}y}\mid x,y\in \frakM(X),q \in \frakM^\star(X)\}\Big\}$$
is a \bfk-basis of the free modified differential algebra $\bfk\frakM(X)/\Id(S_{\rm mdiffl})$ of weight $\lambda$. \mlabel{it:mdiff02r}
\end{enumerate}
\mlabel{thm:gsbmd0r}
\end{theorem}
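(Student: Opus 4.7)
The plan is to mimic the proof template of Theorem \ref{thm:gsbd0} (the weight-zero case with respect to $\dbl$), tracking how the extra $\lambda xy$ term propagates through the compositions. First, I would enlarge the statement so as to work with the equivalent set
$$ S_{\rm mdiffl} = \Big\{\lc x\rc y - \lc xy\rc + x\lc y\rc + \lambda xy,\ \lc\bfone\rc + \lambda \ \Big|\ x,y \in \frakM(X)\setminus\{\bfone\}\Big\},$$
the extra generator $\lc\bfone\rc+\lambda$ encoding the forced relation $D(1)=-\lambda$ that arises on setting $x=y=\bfone$ in Eq.~\eqref{eq:mdiff}. Under $\dbl$, the leading monomial of $f(x,y):=\lc x\rc y - \lc xy\rc + x\lc y\rc + \lambda xy$ is $\lc x\rc y$ (exactly as in the weight-zero case, since the additional term $\lambda xy$ has strictly smaller $\X$-degree content in the sense of the path sequence), and the leading monomial of $\lc\bfone\rc+\lambda$ is $\lc\bfone\rc$.

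Next I would list the ambiguities. Compositions involving $\lc\bfone\rc+\lambda$ are immediately trivial: an intersection with $f(x,y)$ is impossible because $\lc\bfone\rc$ is prime, and any including composition amounts to substituting $\lc\bfone\rc \mapsto -\lambda$ inside a context, which reduces without residue. The remaining ambiguities among the $f(x,y)$ match exactly those of Theorem~\ref{thm:gsbd0}:
\begin{itemize}
\item an intersection ambiguity $w_1 = \lc x\rc\lc y\rc z$ from $\bigl(f(x,\lc y\rc),\, f(y,z)\bigr)_{w_1}$;
\item an including ambiguity $w_2 = \lc q|_{\lc x\rc y}\rc z$ from $\bigl(f(q|_{\lc x\rc y}, z),\, f(x,y)\bigr)_{w_2}$;
\item an including ambiguity $w_3 = \lc x\rc q|_{\lc y\rc z}$ from $\bigl(f(x, q|_{\lc y\rc z}),\, f(y,z)\bigr)_{w_3}$.
\end{itemize}

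For each of $w_1, w_2, w_3$ I would expand the composition, then repeatedly substitute via the rewriting rule $\lc x\rc y \equiv \lc xy\rc - x\lc y\rc - \lambda xy$, and verify that every term cancels modulo $(S_{\rm mdiffl}, w_i)$. Formally the calculation is parallel to the weight-zero computation, but with each application of $f$ contributing an additional $\lambda$-summand; these $\lambda$-summands will pair off in exactly the same combinatorial pattern as the pure bracket terms do, because the term $\lambda xy$ behaves under the symmetry $x\leftrightarrow$(rest) the same way that $\lc xy\rc - x\lc y\rc$ behaves, up to sign. Once triviality modulo $(S_{\rm mdiffl}, w_i)$ is verified for $i=1,2,3$, part \eqref{it:mdiff01r} follows from the definition of a Gr\"obner-Shirshov basis, and part \eqref{it:mdiff02r} is an immediate consequence of Lemma \ref{lem:cdl}\eqref{it:cd4}.

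The main obstacle I anticipate is the sheer bookkeeping in the $w_2$ and $w_3$ verifications: because substitution happens inside a $\star$-bracketed context $q$, each $\lambda$-term born from applying $f$ gets wrapped in $q$ and must be matched with a partner created by the dual substitution. The subtlety is making sure the $\lambda$-contributions at the inner level and at the outer level are collected and cancelled in the correct order, which is nothing new beyond a careful linear expansion. Aside from this accounting, no new idea beyond the weight-zero proof is required; in particular the monomial order $\dbl$ does all of its work in identifying $\lc x\rc y$ as the leading monomial, independently of $\lambda$.
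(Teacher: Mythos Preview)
Your plan is correct and follows the paper's proof almost exactly: the same reformulated generating set, the same leading monomial $\lc x\rc y$, and the same three ambiguities $w_1,w_2,w_3$ handled by repeated rewriting. The only place you are too brisk is the compositions with $\lc\bfone\rc+\lambda$: the paper does not dismiss these but explicitly lists and checks two further including ambiguities $w_4=\lc q|_{\lc\bfone\rc}\rc\, x$ and $w_5=\lc x\rc\,q|_{\lc\bfone\rc}$, and while your substitution heuristic $\lc\bfone\rc\mapsto-\lambda$ is precisely how those computations close, each still requires a short explicit reduction rather than being immediate.
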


\begin{proof}
\mref{it:mdiff01r} We can replace the $S_{\rm mdiffl}$ by
$$ S_{\rm mdiffl} = \Big\{\blw{x}y-\blw{xy}+x\blw{y}+\lambda xy, \blw{\bfone} +\lambda  \mid x,y\in \mapm{X}\setminus\{\bfone\}\Big\}.$$
Denote by
$$f(x,y) = \blw{x}y-\blw{xy}+x\blw{y} + \lambda xy\in S_{\rm mdiffl},\,\text{ where }\, x,y\in  \frakM(X)\setminus\{\bfone\}.$$
We have the leading monomial $\lbar{f(x,y)} = \blw{x}y.$
The ambiguities of all possible intersection compositions are
$$w_1=\blw{x}\blw{y}z, \text{ where } x,y,z \in \mapm{X}\setminus\{\bfone\}, $$
and the ambiguities of all possible including compositions are
$$w_2= \blw{q\suba{\blw{x}y}}z, w_3=\blw{x}q\suba{\blw{y}z}, w_4=\blw{q\suba{\blw{\bfone}}}x \,\text{ and }\,w_5=\blw{x} q|_{\blw{\bfone}},$$
where $x,y,z \in \mapm{X}\setminus\{\bfone\}$ and $q \in \frakM^\star(X)$.
For the ambiguity $w_1$, the corresponding intersection composition is trivial by
\begin{align*}
&\Big(f(x,\blw{y}), f(y,z)\Big)_{w_1}\\
=&~ \big(\blw{x}\blw{y}-\blw{x\blw{y}}+x\blw{\blw{y}}+\lambda x\blw{y}\big)z-\blw{x}\big(\blw{y}z-\blw{yz}+y\blw{z}+\lambda yz\big)\\
=&~-\blw{x\blw{y}}z+x\blw{\blw{y}}z+\lambda x\blw{y}z+\blw{x}\blw{yz}-\blw{x}y\blw{z}-\lambda\blw{x}yz\\
\equiv&~
-\blw{x\blw{y}z}+x\blw{y}\blw{z}+\lambda x\blw{y}z
+x\blw{\blw{y}z}-x\blw{y}\blw{z}-\lambda x\blw{y}z+\lambda x\blw{y}z\\
&+\blw{x\blw{yz}}-x\blw{\blw{yz}}-\lambda x\blw{yz}
-\blw{xy\blw{z}}+x\blw{y\blw{z}}+\lambda xy\blw{z}-\lambda \blw{x}yz\\
\equiv&~
-\blw{x\blw{y}z}+\lambda x\blw{y}z
+x\blw{\blw{y}z}+\blw{x\blw{yz}}-x\blw{\blw{yz}}-\lambda x\blw{yz}
-\blw{xy\blw{z}}+x\blw{y\blw{z}}+\lambda xy\blw{z}-\lambda \blw{x}yz\\
\equiv&~ \blw{-x\blw{yz}+xy\blw{z}+\lambda xyz}
+x\blw{\blw{yz}-y\blw{z}-\lambda yz}
+\blw{x\blw{yz}}-x\blw{\blw{yz}}-\blw{xy\blw{z}}+x\blw{y\blw{z}}\\
&+\lambda\big(x\blw{y}z-x\blw{yz}+xy\blw{z}-\blw{x}yz\big)\\
\equiv&~\lambda\big(\blw{xyz}-x\blw{yz}+x\blw{y}z-x\blw{yz}+xy\blw{z}-\blw{x}yz\big)\\
\equiv&~\lambda\big(\blw{xyz}+x\blw{y}z-2x\blw{yz}+xy\blw{z}-\blw{x}yz\big)\\
\equiv&~\lambda\big(\blw{xyz}+x\blw{yz}-xy\blw{z}-\lambda xyz-2x\blw{yz}+xy\blw{z}-\blw{xy}z+x\blw{y}z+\lambda xyz\big)\\
\equiv&~\lambda\big(\blw{xyz}-x\blw{yz}-\blw{xy}z+x\blw{y}z\big) \\
\equiv&~\lambda\big(\blw{xyz}-x\blw{yz}-\blw{xyz}+xy\blw{z}+\lambda xyz+x\blw{yz}-xy\blw{z}-\lambda xyz\big) \\
\equiv&~ 0 \mod (S_{\rm mdiffl}, w_1).
\end{align*}

The ambiguity $w_2$ is from the including composition
$$\Big(f(q\suba{\blw{x}y},z), f(x,y)\Big)_{w_2}.$$
Its triviality follows from
\begin{align*}
&\Big(f(q\suba{\blw{x}y},z), f(x,y)\Big)_{w_2}\\
=&~\Big(\blw{q\suba{\blw{x}y}}z-\blw{q\suba{\blw{x}y}z}+q\suba{\blw{x}y}\blw{z}+\lambda q\suba{\blw{x}y}{z}\Big)
-\Big(\blw{q\suba{\blw{x}y-\blw{xy}+x\blw{y}+\lambda xy}}z\Big)\\
=&~-\blw{q\suba{\blw{x}y}z}+q\suba{\blw{x}y}\blw{z}+\lambda q\suba{\blw{x}y}{z}
+\blw{q\suba{\blw{xy}}}z-\blw{q\suba{x\blw{y}}}z-\lambda\blw{q\suba{x{y}}}z\\
\equiv&~-\blw{q\suba{\blw{xy}-x\blw{y}-\lambda xy}z}+q\suba{\blw{xy}-x\blw{y}-\lambda xy}\blw{z}+\lambda q\suba{\blw{xy}-x\blw{y}-\lambda xy}{z}
+\blw{q\suba{\blw{xy}}z}-q\suba{\blw{xy}}\blw{z}-\lambda q\suba{\blw{xy}}{z}\\
&-\blw{q\suba{x\blw{y}}z}+q\suba{x\blw{y}}\blw{z}+\lambda q\suba{x\blw{y}}{z}-\lambda\blw{q\suba{x{y}}}z\\
\equiv&~\lambda\big(\blw{q\suba{ xy}z}- q\suba{xy}\blw{z}+ q\suba{\blw{xy}-x\blw{y}-\lambda xy}{z}
- q\suba{\blw{xy}}{z}+ q\suba{x\blw{y}}{z}-\blw{q\suba{x{y}}}z \big)\\
\equiv&~\lambda\big(\blw{q\suba{ xy}z}- q\suba{xy}\blw{z}-\lambda q\suba{xy}{z}-\blw{q\suba{x{y}}z}+{q\suba{x{y}}}\blw{z}+\lambda{q\suba{x{y}}}z \big)\\
\equiv&~0 \mod (S_{\rm mdiffl}, w_2)
\end{align*}

For the ambiguity $w_3$, its composition is trivial by
\begin{align*}
&\Big(f(x, q\suba{\blw{y}z}), f(y,z)\Big)_{w_3}\\
=&~\Big(\blw{x}q\suba{\blw{y}z}-\blw{xq\suba{\blw{y}z}}+x\blw{q\suba{\blw{y}z}} + \lambda xq\suba{\blw{y}z}\Big)
-\Big(\blw{x}q\suba{\blw{y}z-\blw{yz}+y\blw{z} + \lambda yz}\Big)\\
=&~-\blw{xq\suba{\blw{y}z}}+x\blw{q\suba{\blw{y}z}} + \lambda xq\suba{\blw{y}z}
+\blw{x}q\suba{\blw{yz}}-\blw{x}q\suba{y\blw{z}}-\blw{x}q\suba{\lambda yz}\\
\equiv&~-\blw{xq\suba{\blw{yz}-y\blw{z} - \lambda yz}}+x\blw{q\suba{\blw{yz}-y\blw{z} - \lambda yz}} + \lambda xq\suba{\blw{yz}-y\blw{z} - \lambda yz}
+\blw{xq\suba{\blw{yz}}}-x\blw{q\suba{\blw{yz}}} - \lambda xq\suba{\blw{yz}} \\
&-\blw{xq\suba{y\blw{z}}}+x\blw{q\suba{y\blw{z}}} + \lambda xq\suba{y\blw{z}}
-\blw{x}q\suba{\lambda yz}\\
=&~\lambda\big(\blw{xq\suba{ yz}}-x\blw{q\suba{ yz}} + xq\suba{\blw{yz}-y\blw{z} - \lambda yz}
- xq\suba{\blw{yz}}  +  xq\suba{y\blw{z}}-\blw{x}q\suba{ yz}\big)\\
\equiv&~\lambda\big(\blw{xq\suba{ yz}}-x\blw{q\suba{ yz}} - \lambda xq\suba{ yz}-\blw{xq\suba{ yz}}+x\blw{q\suba{ yz}} + \lambda xq\suba{ yz}\big)\\
\equiv&~0 \mod (S_{\rm mdiffl}, w_3)
\end{align*}

The composition of the ambiguity $w_4$ is trivial by
\begin{align*}
&\Big(f(q\suba{\blw{\bfone}},x), \blw{\bfone}+\lambda\Big)\\
=&~ (\blw{q\suba{\blw{\bfone}}}x-\blw{q\suba{\blw{\bfone}}x}+q\suba{\blw{\bfone}}\blw{x} + \lambda q\suba{\blw{\bfone}}x)-\blw{q\suba{\blw{\bfone}+\lambda}}x\\
=&~ -\blw{q\suba{\blw{\bfone}}x}+q\suba{\blw{\bfone}}\blw{x} + \lambda q\suba{\blw{\bfone}}x-\blw{q\suba{\lambda}}x\\
\equiv&~ \blw{q\suba{\lambda}x}-q\suba{\lambda}\blw{x} - \lambda q\suba{\lambda}x
-\blw{q\suba{\lambda}x}+q\suba{\lambda}\blw{x} + \lambda q\suba{\lambda}x\\
\equiv&~ 0 \mod (S_{\rm mdiffl}, w_4).
\end{align*}

Finally, the triviality of the composition from the ambiguity $w_5$
is from
\begin{align*}
&\Big(f(x, q\suba{\blw{\bfone}}), \blw{\bfone}+\lambda\Big)\\
=&~ (\blw{x}q\suba{\blw{\bfone}}-\blw{xq\suba{\blw{\bfone}}}+x\blw{q\suba{\blw{\bfone}}} + \lambda xq\suba{\blw{\bfone}} )-\blw{x}q\suba{\blw{\bfone}+\lambda}\\
=&~ -\blw{xq\suba{\blw{\bfone}}}+x\blw{q\suba{\blw{\bfone}}} + \lambda xq\suba{\blw{\bfone}} -\blw{x}q\suba{\lambda}\\
\equiv&~ \blw{xq\suba{\lambda}}-x\blw{q\suba{\lambda}} - \lambda xq\suba{\lambda} -\blw{xq\suba{\lambda}}+x\blw{q\suba{\lambda}} + \lambda xq\suba{\lambda}\\
\equiv&~ 0 \mod (S_{\rm mdiffl}, w_5).
\end{align*}
Thus $S_{\rm mdiffl}$ is a Gr\"{o}bner-Shirshov basis in $\bfk\mapm{X}$.

\mref{it:mdiff02r} It follows from Lemma~\mref{lem:cdl} and Item~\mref{it:mdiff01r}.
\end{proof}

Replacing the monomial $\dbl$ utilized in Theorem~\mref{thm:gsbmd0r} by $\dbr$, we have

\begin{theorem} Let $X$ be a well-ordered set and $\dbr$ the monomial order on $\frakM(X)$ defined in Eq.~\meqref{eq:ordr}.
\begin{enumerate}
\item The set
$$S_{\rm mdiffr} := \Big\{x\blw{y}-\blw{xy}+\blw{x}y+\lambda xy  \mid x,y\in \mapm{X}\Big\}$$
is a Gr\"{o}bner-Shirshov basis in $\bfk\mapm{X}$. \mlabel{it:mdiff01l}

\item The set $${\rm Irr}(S_{\rm mdiffr}):= \Big\{\frakM(X)\setminus \{q|_{x\blw{y}} \mid x,y\in \frakM(X),q \in \frakM^\star(X)\}\Big\}$$
is a \bfk-basis of the free modified differential algebra $\bfk\frakM(X)/\Id(S_{\rm mdiffr})$ of weight $\lambda$. \mlabel{it:mdiff02l}
\end{enumerate}
\mlabel{thm:gsbmd0l}
\end{theorem}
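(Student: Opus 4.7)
The theorem is the mirror image of Theorem~\ref{thm:gsbmd0r}, with the order $\dbl$ replaced by $\dbr$ and the distinguished leading monomial of the modified-differential relation flipped from $\lc x\rc y$ to $x\lc y\rc$. Accordingly, my plan is to adapt the proof of Theorem~\ref{thm:gsbmd0r} wholesale, systematically swapping the roles of left and right factors throughout and exchanging the rewriting rule $\lc x\rc y \to \lc xy\rc - x\lc y\rc - \lambda xy$ for $x\lc y\rc \to \lc xy\rc - \lc x\rc y - \lambda xy$. As in the previous proof, I would first augment $S_{\rm mdiffr}$ with the element $\lc\bfone\rc + \lambda$ coming from the $x=y=\bfone$ specialization (equivalently, $D(\bfone)=-\lambda$), so that we may restrict attention to $f(x,y) = x\lc y\rc - \lc xy\rc + \lc x\rc y + \lambda xy$ with $x,y \in \mapm{X}\setminus\{\bfone\}$. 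Under $\dbr$, the same $\patr$-computation used in the example just after Eq.~\eqref{eq:ordr} gives $\overline{f(x,y)} = x\lc y\rc$.

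Next I would enumerate the ambiguities. Because every leading monomial is a two-prime product of shape (word)$\cdot$(bracketed prime), the only intersection composition is
\[ w_1 = x\lc y\rc\lc z\rc, \]
obtained by overlapping the trailing prime $\lc y\rc$ of $\overline{f(x,y)}$ with the leading prime $\lc y\rc$ of $\overline{f(\lc y\rc, z)}$. The including compositions account for $f(x,y)$ sitting as a sub-bracketed-word either to the left of the outer bracket or inside it, yielding
\[ w_2 = q|_{x\lc y\rc}\lc z\rc \quad\text{and}\quad w_3 = x\lc q|_{y\lc z\rc}\rc, \]
and, for $\lc\bfone\rc+\lambda$ embedded in a larger relation,
\[ w_4 = q|_{\lc\bfone\rc}\lc x\rc \quad\text{and}\quad w_5 = x\lc q|_{\lc\bfone\rc}\rc, \]
with $x,y,z \in \mapm{X}\setminus\{\bfone\}$ and $q \in \frakM^\star(X)$. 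A short dimension check rules out intersection overlaps between $\lc\bfone\rc$ and any $x\lc y\rc$, since their breadths do not admit the required inequality $\max\{|\bar f|,|\bar g|\}<|w|<|\bar f|+|\bar g|$.

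Finally, I would verify that each of these five compositions is trivial modulo $(S_{\rm mdiffr}, w_i)$. Because the defining identity $\lc xy\rc = \lc x\rc y + x\lc y\rc + \lambda xy$ is symmetric under interchanging the two factors, every reduction is the direct mirror of the corresponding reduction in Theorem~\ref{thm:gsbmd0r}: each cancellation that worked there works here after swapping the left and right occurrences and applying the new rewriting rule to the \emph{right}-most $x\lc y\rc$-shaped subword at each step. Once the \gsb property is established, part~\eqref{it:mdiff02l} follows immediately from Lemma~\ref{lem:cdl}.

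I expect the main obstacle to be the intersection composition at $w_1$, which requires the longest chain of substitutions and a delicate collection of weight-$\lambda$ contributions at the end, paralleling the several-line $\lambda$-simplification that closes the $w_1$ computation in Theorem~\ref{thm:gsbmd0r}. Once that reduction is in hand, the remaining four ambiguities $w_2,\dots,w_5$ are essentially read off from the preceding proof by the left-right symmetry.
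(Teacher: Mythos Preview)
Your proposal is correct and matches the paper's approach exactly: the paper's own proof consists of the single sentence ``It is similar to the proof of Theorem~\ref{thm:gsbmd0r},'' and your plan of mirroring that argument under the left--right symmetry (swapping $\dbl$ for $\dbr$, $\lc x\rc y$ for $x\lc y\rc$, and the rewriting rule accordingly) is precisely what is intended. Your enumeration of the ambiguities $w_1,\dots,w_5$ is the correct mirror of the five ambiguities in Theorem~\ref{thm:gsbmd0r}, and the reductions carry over verbatim after the swap.
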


\begin{proof}
It is similar to the proof of  Theorem~\mref{thm:gsbmd0r}.
\end{proof}

\section{New bases of free (modified) Rota-Baxter algebras }
In this section, we construct respectively some new linear bases of free nonunitary Rota-Baxter algebras and free
nonunitary modified Rota-Baxter algebras, using the method of Gr\"{o}bner-Shirshov bases.

\subsection{New bases of free nonunitary Rota-Baxter algebras}
We are going to give the first new linear basis of the free nonunitary Rota-Baxter algebra.
\begin{defn}\cite{Ba,Gl}
Let $\lambda\in \bfk$ be a fixed element. A {\bf Rota-Baxter algebra of weight $\lambda$ } is a pair $(A, P)$ consisting of an algebra $A$ and a linear operator $P: A \longrightarrow A$ such that
$$P(x)P(y)=P(P(x)y)+P(xP(y))+\lambda P(xy), \,\text{ for }\, x,y \in A.$$
\end{defn}

Now we make clear the reason of restriction to the nonunitary case.
If we employ the monomial order $\leq_{\rm db}$ in~\mcite{ZGGS}, then the leading monomial of the Rota-Baxter identity
is $\blw{x}\blw{y}$ and the set $$\Big\{ \blw{x}\blw{y} - \blw{\blw{x} y} - \blw{x\blw{y}} - \lambda \blw{xy}\mid x,y\in \frakM(X)\Big\}$$
is a Gr\"{o}bner-Shirshov basis in $\bfk\frakM(X)$~\mcite{ZGGS}.
If we utilize the monomial order $\dbl$ (resp. $\dbr$), then the leading monomial of the Rota-Baxter identity is $\blw{\blw{x} y}$ (resp. $\blw{x\blw{y}}$) for $x,y \in \mapm{X}\setminus\{\bfone\}$. Indeed, we have
\begin{align*}
\degx(\blw{x\blw{y}})&= \degx(\blw{\blw{x} y})=\degx(\blw{x}\blw{ y})=\degx(\blw{x y})=\degx(x)+\degx(y),\\
\patl(\blw{\blw{x} y})&=\Big(P\mu P \,\patl(x), P\mu \,\patl(y)\Big)\,\text{ and }\,\patr(\blw{\blw{x} y})=\Big( P\mu \,\patr(y), P\mu P \,\patr(x\Big),\\
 \patl(\blw{x\blw{y}})&=\Big(P\mu\, \patl(x), P\mu P \,\patl(y)\Big)\,\text{ and }\,\patr(\blw{x\blw{y}})=\Big(P\mu P \,\patr(y),P\mu \,\patr(x)\Big),\\
 \patl(\blw{x}\blw{ y})&=\Big(\mu P\,\patl(x),  \mu P\,\patl(y)\Big)\,\text{ and }\, \patr(\blw{x}\blw{ y})=\Big( \mu P\,\patr(y), \mu P\,\patr(x)\Big),\\
\patl(\blw{x y})&=\Big(P\mu\, \patl(x), P\mu \,\patl(y)\Big)\,\text{ and }\, \patr(\blw{xy})=\Big( P\mu \,\patr(y), P\mu \,\patr(x)\Big).
\end{align*}
By Eqs.~(\mref{eq:ordl}) and~(\mref{eq:ordr}),
$$\blw{\blw{x} y}\dbln\blw{x}\blw{ y} \dbln\blw{x\blw{y}} \dbln \blw{x y}$$
and
$$\blw{x\blw{y}}\dbrn\blw{x}\blw{ y} \dbrn\blw{\blw{x} y} \dbrn \blw{x y}.$$
However the set
$$\Big\{\blw{\blw{x}y} + \blw{x\blw{y}} +\lambda \blw{xy}- \blw{x}\blw{y}\mid x,y\in \mapm{X}\Big\}$$
is not a Gr\"{o}bner-Shirshov basis in $\bfk\frakM(X)$ with respect to $\dbl$ or $\dbr$, explained as follows.
Without loss of generality, we consider the monomial order $\dbl$.
The leading monomial of $$\blw{x}\blw{\bfone}-\blw{x\blw{\bfone}}-\blw{\blw{x}}-\lambda \blw{x},\,\text{ where } x\in \frakM(X)\setminus\{\bfone\},$$ is $\blw{x}\blw{\bfone}$, and the leading monomial of $\blw{\bfone}\blw{\bfone}-2\blw{\blw{\bfone}}-\lambda \blw{\bfone}$ is $\blw{\bfone}\blw{\bfone}$. For the ambiguity $\blw{x}\blw{\bfone}\blw{\bfone}$, the corresponding intersection composition is
\begin{align*}
&\Big( \blw{x}\blw{\bfone}-\blw{x\blw{\bfone}}-\blw{\blw{x}}-\lambda \blw{x}, \blw{\bfone}\blw{\bfone}-2\blw{\blw{\bfone}}-\lambda \blw{\bfone}\Big)_{\blw{x}\blw{\bfone}\blw{\bfone}}\\
=~& \Big(\blw{x}\blw{\bfone}-\blw{x\blw{\bfone}}-\blw{\blw{x}}-\lambda \blw{x}\Big)\blw{\bfone}-
\blw{x}\Big(\blw{\bfone}\blw{\bfone}-2\blw{\blw{\bfone}}-\lambda \blw{\bfone}\Big)\\
=~&-\Big(\blw{x\blw{\bfone}}\blw{\bfone}+\blw{\blw{x}}\blw{\bfone}\Big)
+2\blw{x}\blw{\blw{\bfone}}\\
\equiv~&-\Big(
\blw{x\blw{\bfone}\blw{\bfone}}+\blw{\blw{x\blw{\bfone}}}+\lambda \blw{x\blw{\bfone}}
+\blw{\blw{x}\blw{\bfone}}+\blw{\blw{\blw{x}}}+\lambda \blw{\blw{x}}\Big)
+2\blw{x}\blw{\blw{\bfone}}\\
\equiv~&-\Big(
\Big\lc 2x\blw{\blw{\bfone}}+\lambda x\blw{\bfone}\Big\rc+\blw{\blw{x\blw{\bfone}}}+\lambda \blw{x\blw{\bfone}}
+ \Big\lc\blw{x\blw{\bfone}}+\blw{\blw{x}}\\
&+\lambda \blw{x}\Big\rc +\blw{\blw{\blw{x}}}+\lambda \blw{\blw{x}}\Big)
+2\blw{x}\blw{\blw{\bfone}}\\
\equiv~&-\Big(
2\blw{x\blw{\blw{\bfone}}}+\lambda \blw{x\blw{\bfone}}+\blw{\blw{x\blw{\bfone}}}+\lambda \blw{x\blw{\bfone}}
+\blw{\blw{x\blw{\bfone}}}+\blw{\blw{\blw{x}}}\\
&+\lambda \blw{\blw{x}}+\blw{\blw{\blw{x}}}+\lambda \blw{\blw{x}}\Big)
+2\blw{x}\blw{\blw{\bfone}}\\
\equiv~&-2\Big(
\blw{x\blw{\blw{\bfone}}}+\lambda \blw{x\blw{\bfone}}+\blw{\blw{x\blw{\bfone}}}+\blw{\blw{\blw{x}}}+\lambda \blw{\blw{x}}\Big)
+2\blw{x}\blw{\blw{\bfone}},
\end{align*}
which is not trivial.

Roughly speaking, $\maps{X}$ is the subset of $\mapm{X}$ not involving $\bfone$.
For example, $\bfone, \lc \bfone\rc, x\lc \bfone\rc\notin \maps{X}$ with $x\in X$.
Notice that the restriction of the monomial orders $\dbl$ and $\dbr$ to $\maps{X}$ are still monomial orders.

\begin{theorem} Let $(X, \leq)$ be a well-ordered set and $\dbl$ the monomial order on $\maps{X}$.
\begin{enumerate}
\item The set
$$ S_{{\rm rbl}}:= \Big\{\blw{\blw{x}y} + \blw{x\blw{y}} +\lambda \blw{xy}- \blw{x}\blw{y}\mid x,y\in \maps{X}\Big\} $$
is a Gr\"{o}bner-Shirshov basis in $\bfk\maps{X}$. \mlabel{it:rbr1}

\item The set $${\rm Irr}(S_{{\rm rbl}}) := \Big\{\frakS(X) \setminus \{q|_{\blw{\blw{x}y}} \mid x,y\in \frakS(X), q \in \frakS^\star(X)\} \Big\}$$ is a \bfk-basis of the
free nonunitary Rota-Baxter algebra $\bfk\frakS(X)/\Id(S_{{\rm rbl}})$ on $X$. \mlabel{it:rbr2}
\end{enumerate}
\mlabel{thm:rbgs}
\end{theorem}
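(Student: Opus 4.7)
The plan is to verify the hypotheses of the nonunitary Composition-Diamond lemma (Lemma~\mref{lem:cdlnu}) applied to $S_{\rm rbl}$ and then read off (b) from item~\mref{it:cdnu4}. Writing
$$f(x,y) := \blw{\blw{x}y} + \blw{x\blw{y}} + \lambda\blw{xy} - \blw{x}\blw{y}, \quad x, y \in \frakS(X),$$
for the typical element of $S_{\rm rbl}$, I first confirm that $\lbar{f(x,y)} = \blw{\blw{x}y}$ under $\dbl$. This is exactly the four-way $\patl$-comparison carried out in the paragraph preceding the theorem: all four bracketed monomials of $f(x,y)$ share the same $\degx$-value, and under the degree-lexicographical order on $M(X\sqcup\{P,\mu\})$ the first coordinate of $\patl(\blw{\blw{x}y})$ dominates because its length exceeds that of its competitors.

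Since the leading monomial has breadth one, any intersection composition would require an ambiguity $w$ with $1 < |w| < 2$, which is vacuous; so only including compositions need to be analysed. Inspecting the tree-shape of $\blw{\blw{x}y}$, the only places a sub-instance $\blw{\blw{x'}y'}$ can lodge are inside the bracket $\blw{x}$ or inside the factor $y$, yielding exactly the two ambiguity families
\begin{align*}
w_1 &= \blw{\blw{p|_{\blw{\blw{x'}y'}}}\,y}, & w_2 &= \blw{\blw{x}\,p|_{\blw{\blw{x'}y'}}},
\end{align*}
with $p \in \frakS^\star(X)$ (the case $p = \star$ included) and $x,y,x',y' \in \frakS(X)$. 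The corresponding including composition is $f_1 - q_j|_{f_2}$, where $f_2 = f(x',y')$, $f_1$ is the generator whose leading term is $w_j$, and $q_j \in \frakS^\star(X)$ is the obvious $\star$-context.

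The main obstacle, and the only nontrivial step, is to show both compositions trivial modulo $(S_{\rm rbl}, w_j)$. The plan is to expand each $f_1 - q_j|_{f_2}$ explicitly, then iteratively rewrite every surviving sub-bracketed word of the form $\blw{\blw{a}b}$ via
$$\blw{\blw{a}b} \equiv -\blw{a\blw{b}} - \lambda\blw{ab} + \blw{a}\blw{b} \pmod{S_{\rm rbl}},$$
at each step confirming that every produced monomial is $\dbln$-below $w_j$. The three terms introduced by the outer $q_j|_{\cdot}$-expansion are expected to pair off against the three terms produced by rewriting the nested $\blw{\blw{a}b}$'s. These computations mirror the including-composition verifications for $w_2$ and $w_3$ in the proof of Theorem~\mref{thm:gsbd0}, since $f(x,y)$ has the same four-term linear shape as the weight-zero differential generator there; the difficulty is purely bookkeeping, compounded by the $p|_{\cdot}$-substitutions that must be carried through every surviving term.

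Once these triviality checks are in place, (a) is established. Part (b) then follows at once from Lemma~\mref{lem:cdlnu}\mref{it:cdnu4}, since the generator of $S_{\rm rbl}$ is, up to sign, the standard nonunitary Rota-Baxter identity, so $\bfk\frakS(X)/\Id(S_{\rm rbl})$ is the free nonunitary Rota-Baxter algebra of weight $\lambda$ on $X$.
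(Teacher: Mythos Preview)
Your plan matches the paper's proof: both identify $\blw{\blw{x}y}$ as the leading term under $\dbl$, rule out intersection compositions by the breadth-one observation, enumerate the same two including-composition families $w_1,w_2$, and verify their triviality by the direct rewriting you describe, after which part~(b) follows from Lemma~\mref{lem:cdlnu}\mref{it:cdnu4}.

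One small omission (which the paper's enumeration also glosses over): there is a third including ambiguity not of your two shapes, namely
\[
w_3=\blw{\blw{\blw{c}d}\,b},\qquad f_1=f(\blw{c}d,\,b),\qquad f_2=f(c,d),\qquad q=\blw{\star\,b},
\]
in which the inner leading term $\lbar{f_2}=\blw{\blw{c}d}$ coincides with the inner bracket $\blw{a}$ of $\lbar{f_1}=\blw{\blw{a}b}$ itself (here $a=\blw{c}d$), rather than lodging strictly inside $a$ as in your $w_1$ or inside $y$ as in your $w_2$. The composition $f_1-q|_{f_2}$ is again trivial modulo $(S_{\rm rbl},w_3)$ by exactly the same rewriting procedure, so the result is unaffected, but for a complete case analysis this family should be listed alongside $w_1$ and $w_2$.
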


\begin{proof}
\mref{it:rbr1} Denote
$$f(x,y) =\blw{\blw{x}y} + \blw{x\blw{y}} +\lambda \blw{xy}- \blw{x}\blw{y}\in S_{{\rm rbl}},\,\text{ where }\,  x,y\in \maps{X}.$$
By Eq.~(\mref{eq:ordl}), we have the leading monomial
$$\lbar{f(x,y)}=\blw{\blw{x}y}.$$
Sine $|\lbar{f(x,y)} | = 1$, there are no intersection compositions.
The ambiguities of all including possible compositions are
 $$w_1=\blw{\blw{q\suba{{\blw{\blw{x}y}}}}z}\,\text{ and }\, w_2=\blw{\blw{x}q\suba{{\blw{\blw{y}z}}}},$$
where $x,y,z \in \maps{X}$ and $q \in \frakS^\star(X)$.

For the ambiguity $w_1$, it is from the including composition
$$
\Big(f(q\suba{{\blw{\blw{x}y}}},z), f(x,y)\Big)_{w_1},
$$
and its triviality follows from
\begin{align*}
&\Big(f(q\suba{{\blw{\blw{x}y}}},z), f(x,y)\Big)_{w_1}\\
=~&\Big(\blw{\blw{q\suba{{\blw{\blw{x}y}}}}z}+ \blw{q\suba{{\blw{\blw{x}y}}}\blw{z}} +\lambda \blw{q\suba{{\blw{\blw{x}y}}}z}- \blw{q\suba{{\blw{\blw{x}y}}}}\blw{z}\Big)\\
&-\Big(\blw{\blw{q\suba{{\blw{\blw{x}y}}}}z}+\blw{\blw{q\suba{{\blw{x\blw{y}}}}}z}+\blw{\blw{q\suba{{\lambda \blw{xy}}}}z}-\blw{\blw{q\suba{{ \blw{x}\blw{y}}}}z}\Big)\\
=~& \blw{q\suba{{\blw{\blw{x}y}}}\blw{z}} +\lambda \blw{q\suba{{\blw{\blw{x}y}}}z}- \blw{q\suba{{\blw{\blw{x}y}}}}\blw{z}-\blw{\blw{q\suba{{\blw{x\blw{y}}}}}z}-\blw{\blw{q\suba{{\lambda \blw{xy}}}}z}+\blw{\blw{q\suba{{ \blw{x}\blw{y}}}}z}\\
\equiv~& \blw{q\suba{-{\blw{x\blw{y}} -\lambda \blw{xy}+ \blw{x}\blw{y}}}\blw{z}}+\lambda \blw{q\suba{{ -\blw{x\blw{y}} -\lambda \blw{xy}+ \blw{x}\blw{y}}}z}- \blw{q\suba{{- \blw{x\blw{y}} -\lambda \blw{xy}+ \blw{x}\blw{y}}}}\blw{z}\\
&+\blw{q\suba{{\blw{x\blw{y}}}}\blw{z}}+\lambda\blw{q\suba{{\blw{x\blw{y}}}}z}-\blw{q\suba{{\blw{x\blw{y}}}}}\blw{z}
+\blw{q\suba{{\lambda \blw{xy}}}\blw{z}}+\lambda\blw{q\suba{{\lambda \blw{xy}}}z}-\blw{q\suba{{\lambda \blw{xy}}}}\blw{z}\\
&-\blw{q\suba{{ \blw{x}\blw{y}}}\blw{z}}-\lambda\blw{q\suba{{ \blw{x}\blw{y}}}z}+\blw{q\suba{{ \blw{x}\blw{y}}}}\blw{z}\\
\equiv~& 0 \mod(S_{\rm rbl},w_1).
\end{align*}

Further the ambiguity $w_2$ is from the including composition $$\Big(f(x,q\suba{{\blw{\blw{y}z}}}), f(y,z)\Big)_{w_2}, $$
whose triviality follows from
\begin{align*}
&\Big(f(x,q\suba{{\blw{\blw{y}z}}}), f(y,z)\Big)_{w_2}\\
=~&\Big(\blw{\blw{x}q\suba{{\blw{\blw{y}z}}}} + \blw{x\blw{q\suba{{\blw{\blw{y}z}}}}} +\lambda \blw{xq\suba{{\blw{\blw{y}z}}}}- \blw{x}\blw{q\suba{{\blw{\blw{y}z}}}}\Big)\\
&-\Big(\blw{\blw{x}q\suba{{\blw{\blw{y}z} + \blw{y\blw{z}} +\lambda \blw{yz}- \blw{y}\blw{z}}}} \Big)\\
=~& \blw{x\blw{q\suba{{\blw{\blw{y}z}}}}} +\lambda \blw{xq\suba{{\blw{\blw{y}z}}}}- \blw{x}\blw{q\suba{{\blw{\blw{y}z}}}}
-\blw{\blw{x}q\suba{{  \blw{y\blw{z}}}}} -\blw{\blw{x}q\suba{\lambda \blw{yz}}} + \blw{\blw{x}q\suba{\blw{y}\blw{z}}} \\
\equiv~& \blw{x\blw{q\suba{{- \blw{y\blw{z}} -\lambda \blw{yz}+ \blw{y}\blw{z}}}}} +\lambda \blw{xq\suba{{- \blw{y\blw{z}} -\lambda \blw{yz}+ \blw{y}\blw{z}}}}- \blw{x}\blw{q\suba{{- \blw{y\blw{z}} -\lambda \blw{yz}+ \blw{y}\blw{z}}}}\\
&+ \blw{x\blw{q\suba{{  \blw{y\blw{z}}}}}} +\lambda \blw{xq\suba{{  \blw{y\blw{z}}}}}- \blw{x}\blw{q\suba{{  \blw{y\blw{z}}}}}
+ \blw{x\blw{q\suba{\lambda \blw{yz}}}} + \lambda \blw{xq\suba{\lambda \blw{yz}}} - \blw{x}\blw{q\suba{\lambda \blw{yz}}} \\
& - \blw{x\blw{q\suba{\blw{y}\blw{z}}}} - \lambda \blw{xq\suba{\blw{y}\blw{z}}}+ \blw{x}\blw{q\suba{\blw{y}\blw{z}}}\\
\equiv~& 0 \mod(S_{\rm rbl},w_2).
\end{align*}
Thus $S_{{\rm rbl}}$ is a Gr\"{o}bner-Shirshov basis in $\bfk\maps{X}$.

\mref{it:rbr2} It follows from Lemma~\mref{lem:cdlnu} and Item~\mref{it:rbr1}.
\end{proof}

\begin{remark}
Deotsenko~\mcite{Deo} gave an operad version of Theorem~\mref{thm:rbgs}~\mref{it:rbr2}, employing
the path-lexicographical order~\mcite{He}.
\end{remark}

Now we turn into the second new linear basis of the free nonunitary Rota-Baxter algebra.

\begin{theorem} Let $(X, \leq)$ be a well-ordered set and $\dbr$ the monomial order on $\maps{X}$.
\begin{enumerate}
\item The set
$$ S_{{\rm rbr}} := \Big\{\blw{x\blw{y}} +\blw{\blw{x}y} + \lambda \blw{xy}- \blw{x}\blw{y}\mid x,y\in \maps{X}\Big\} $$
is a Gr\"{o}bner-Shirshov basis in $\bfk\maps{X}$. \mlabel{it:rbl1}

\item The set $${\rm Irr}(S_{{\rm rbr}}) := \Big\{\frakS(X) \setminus \{q|_{\blw{x\blw{y}}}  \mid  x,y\in \frakS(X), q \in \frakS^\star(X)\} \Big\}$$ is a \bfk-basis of the
free nonunitary Rota-Baxter algebra $\bfk\frakS(X)/\Id(S_{{\rm rbr}})$ on $X$. \mlabel{it:rbl2}
\end{enumerate}
\mlabel{thm:rbgs2}
\end{theorem}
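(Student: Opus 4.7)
The plan is to mirror the proof of Theorem~\mref{thm:rbgs}, swapping the roles played by the two arguments of $\blw{\ \cdot\ }$ in the leading monomial and working with $\dbr$ in place of $\dbl$. The comparison of patterns given immediately before Theorem~\mref{thm:rbgs} shows that, under $\dbr$, one has $\blw{x\blw{y}} \dbrn \blw{x}\blw{y} \dbrn \blw{\blw{x}y} \dbrn \blw{xy}$, so the leading monomial of
\begin{equation*}
f(x,y) := \blw{x\blw{y}} + \blw{\blw{x}y} + \lambda \blw{xy} - \blw{x}\blw{y}\in S_{\rm rbr}
\end{equation*}
is $\blw{x\blw{y}}$. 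Since this leading monomial has breadth one, no intersection compositions arise, and I only need to verify that every including composition is trivial.

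First, I would enumerate the ambiguities. Analogous to the two cases appearing in Theorem~\mref{thm:rbgs}, there are exactly two shapes in which the leading monomial of an inner instance of $f$ sits inside the leading monomial of an outer instance, namely
\begin{equation*}
w_1 = \blw{q|_{\blw{x\blw{y}}}\blw{z}} \quad\text{and}\quad w_2 = \blw{x\blw{q|_{\blw{y\blw{z}}}}},
\end{equation*}
for $x, y, z \in \frakS(X)$ and $q \in \frakS^\star(X)$. These correspond respectively to the inner leading monomial being placed inside the first argument and inside the second argument of the outer $f$.

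Next, for each $w_i$ I would expand $(f(\cdot,\cdot), f(\cdot,\cdot))^q_{w_i} = f - q|_f$ and then systematically reduce the resulting expression using the congruence
\begin{equation*}
\blw{a\blw{b}} \equiv -\blw{\blw{a}b} - \lambda\,\blw{ab} + \blw{a}\blw{b} \pmod{S_{\rm rbr}},
\end{equation*}
applied inside every outer or inner bracket where it appears. By the same pattern of cancellations as in Theorem~\mref{thm:rbgs}, the contributions obtained by rewriting $f$ on the outer level should cancel pairwise, modulo smaller monomials, against those obtained from $q|_f$ on the inner level. The main effort is bookkeeping, since each reduction produces several bracketed summands that must be tracked carefully; however, the calculation is structurally isomorphic to the one already carried out for Theorem~\mref{thm:rbgs} under the left-right symmetry interchanging $\blw{\blw{x}y}$ and $\blw{x\blw{y}}$, so no new ideas are needed and the anticipated obstacle is purely combinatorial rather than conceptual.

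Once $S_{\rm rbr}$ is shown to be a Gr\"obner-Shirshov basis, part~(2) follows immediately from the nonunitary Composition-Diamond lemma (Lemma~\mref{lem:cdlnu}): the set of bracketed words on $X$ containing no sub-bracketed word of the form $\blw{x\blw{y}}$ maps bijectively onto a $\bfk$-basis of the quotient $\bfk\frakS(X)/\Id(S_{\rm rbr})$, which by construction is the free nonunitary Rota-Baxter algebra of weight $\lambda$ on $X$.
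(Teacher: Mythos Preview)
Your proposal is correct and takes essentially the same approach as the paper, which simply states that the proof is analogous to that of Theorem~\mref{thm:rbgs}. Your enumeration of the two including ambiguities $w_1 = \blw{q|_{\blw{x\blw{y}}}\blw{z}}$ and $w_2 = \blw{x\blw{q|_{\blw{y\blw{z}}}}}$ and the observation that the computation is the left-right mirror of the $\dbl$ case is exactly what the paper intends by ``analogous.''
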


\begin{proof}
It is analogous to the proof of Theorem~\mref{thm:rbgs}.
\end{proof}

\subsection{New bases of free nonunitary modified Rota-Baxter algebras}
Let us agree to begin with the concept of modified Rota-Baxter algebras.
\begin{defn}\mcite{Tri,BGN}
Let $\lambda\in\bfk$ be a fixed constant. A {\bf modified Rota-Baxter algebra of weight $\lambda$} is an algebra $A$ together with a linear operator $P: A \longrightarrow A$ such that
$$P(x)P(y)=P(P(x)y+xP(y))+\lambda xy,\,\text{ for } x,y\in A.$$
\end{defn}

Using the two monomial orders defined in Eqs.~\meqref{eq:ordl} and~\meqref{eq:ordr}, we can obtain two new linear bases of the free nonunitary modified Rota-Baxter algebra.

\begin{theorem} Let $(X, \leq)$ be a well-ordered set and $\dbl$ the monomial order on $\maps{X}$.
\begin{enumerate}
\item The set
$$ S_{{\rm mrbl}}:= \Big\{\blw{\blw{x}y} + \blw{x\blw{y}} +\lambda {xy}- \blw{x}\blw{y}\mid x,y\in \maps{X}\Big\} $$
is a Gr\"{o}bner-Shirshov basis in $\bfk\maps{X}$. \mlabel{it:mrbr1}

\item The set $${\rm Irr}(S_{{\rm mrbl}}) := \Big\{\frakS(X) \setminus \{q|_{\blw{\blw{x}y}} \mid x,y\in \frakS(X), q \in \frakS^\star(X)\} \Big\}$$ is a \bfk-basis of the
free nonunitary modified Rota-Baxter algebra $\bfk\frakS(X)/\Id(S_{{\rm mrbl}})$ on $X$. \mlabel{it:mrbr2}
\end{enumerate}
\mlabel{thm:mrbgs}
\end{theorem}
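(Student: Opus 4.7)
The plan is to parallel the proof of Theorem~\ref{thm:rbgs}, exploiting the fact that $S_{\rm mrbl}$ differs from $S_{\rm rbl}$ only in the replacement of the term $\lambda \blw{xy}$ by $\lambda xy$. Set $f(x,y) := \blw{\blw{x}y} + \blw{x\blw{y}} + \lambda xy - \blw{x}\blw{y}$. The first step is to confirm that the leading monomial is still $\lbar{f(x,y)} = \blw{\blw{x}y}$ under $\dbl$. The comparison $\blw{\blw{x}y} \dbln \blw{x}\blw{y} \dbln \blw{x\blw{y}} \dbln \blw{xy}$ was already established in the discussion preceding Theorem~\ref{thm:rbgs}. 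To dispose of the new monomial $xy$, note that $\pat_{\blw{xy}}(a) = P\,\pat_{xy}(a)$ for every letter $a$ occurring in $xy$, so each path component strictly increases in dlex-length when the outer bracket is added; hence $\blw{xy} \dbln xy$, confirming $\lbar{f(x,y)} = \blw{\blw{x}y}$.

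Next, I would identify the ambiguities. Since $|\lbar{f(x,y)}| = 1$, the condition $\max\{|\lbar f|,|\lbar g|\} < |w| < |\lbar f|+|\lbar g|$ in Definition~\ref{defn:comp}\ref{item:intcomp} forces $1 < |w| < 2$, which is impossible, so there are no intersection compositions. For including compositions, the same analysis as in Theorem~\ref{thm:rbgs}\ref{it:rbr1} applies: the only ambiguities arise from embedding $\blw{\blw{x}y}$ into either the left or the right slot of another instance of $f$, yielding
\[ w_1 = \blw{\blw{q|_{\blw{\blw{x}y}}}z}, \qquad w_2 = \blw{\blw{x}q|_{\blw{\blw{y}z}}}, \]
for $x,y,z \in \maps{X}$ and $q \in \frakS^\star(X)$.

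I would then verify the triviality of the two including compositions $(f(q|_{\blw{\blw{x}y}},z), f(x,y))_{w_1}$ and $(f(x,q|_{\blw{\blw{y}z}}), f(y,z))_{w_2}$ by direct computation, following line-by-line the corresponding calculations in the proof of Theorem~\ref{thm:rbgs}\ref{it:rbr1}. The substitution rule is identical in form, $\blw{\blw{x'}y'} \equiv -\blw{x'\blw{y'}} - \lambda x'y' + \blw{x'}\blw{y'} \mod(S_{\rm mrbl}, w)$, and the residual terms pair up and cancel in the same pattern. Finally, part (b) is immediate from part (a) and the nonunitary Composition-Diamond lemma (Lemma~\ref{lem:cdlnu}).

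The main (modest) obstacle lies in tracking the new $\lambda xy$-contributions. Whereas in the Rota-Baxter case the $\lambda \blw{xy}$-terms could be reduced further via $S_{\rm rbl}$, the corresponding $\lambda xy$-terms here are already irreducible with respect to $S_{\rm mrbl}$; consequently the bookkeeping is actually slightly easier, but one must verify carefully that the coefficients of each surviving unbracketed monomial cancel exactly. I expect no conceptual surprises, only the need to expand both sides symmetrically and match terms of the form $\blw{q|_{\blw{x\blw{y}}}\blw{z}}$, $\lambda q|_{xy}\blw{z}$, $\blw{q|_{\blw{x}\blw{y}}\blw{z}}$, and their $w_2$-analogues.
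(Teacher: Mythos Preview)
Your proposal is correct and follows essentially the same route as the paper's proof: identify the leading monomial $\blw{\blw{x}y}$, rule out intersection compositions via $|\lbar{f}|=1$, check the two including compositions at $w_1=\blw{\blw{q|_{\blw{\blw{x}y}}}z}$ and $w_2=\blw{\blw{x}q|_{\blw{\blw{y}z}}}$ by direct substitution and cancellation, then invoke Lemma~\ref{lem:cdlnu} for part~(b). Your extra remark that $\blw{xy}\dbln xy$ (needed to dispose of the new $\lambda xy$ term) is a useful addition the paper leaves implicit; your aside that the Rota-Baxter $\lambda\blw{xy}$-terms ``could be reduced further'' is a slight mischaracterization---in the proof of Theorem~\ref{thm:rbgs} those terms also cancel directly without further reduction---but this does not affect your argument.
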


\begin{proof}
\mref{it:rbr1} We set
$$f(x,y) =\blw{\blw{x}y} + \blw{x\blw{y}} +\lambda xy- \blw{x}\blw{y}\in S_{\rm mrbl}.$$
In analogy to the proof of Theorem~\mref{thm:rbgs}, we have
$$\lbar{f(x,y)}=\blw{\blw{x}y}.$$
There are no ambiguities of intersection compositions by $|\lbar{f(x,y)}| = 1$.
The ambiguities of all possible compositions are
$$
w_1=\blw{\blw{q\suba{{\blw{\blw{x}y}}}}z}\,\text{ and }\,w_2=\blw{\blw{x}q\suba{{\blw{\blw{y}z}}}}
$$
where $x,y,z \in \maps{X}$ and $q \in \frakS^\star(X)$. The ambiguity $w_1$ is from the including composition
$$
\Big(f(q\suba{{\blw{\blw{x}y}}},z), f(x,y)\Big)_{w_1},
$$
whose triviality follows from
\begin{align*}
&\Big(f(q\suba{{\blw{\blw{x}y}}},z), f(x,y)\Big)_{w_1}\\
=~&\Big(\blw{\blw{q\suba{{\blw{\blw{x}y}}}}z}+ \blw{q\suba{{\blw{\blw{x}y}}}\blw{z}} +\lambda{q\suba{{\blw{\blw{x}y}}}z}- \blw{q\suba{{\blw{\blw{x}y}}}}\blw{z}\Big)\\
&-\Big(\blw{\blw{q\suba{{\blw{\blw{x}y}}}}z}+\blw{\blw{q\suba{{\blw{x\blw{y}}}}}z}+\blw{\blw{q\suba{{\lambda{xy}}}}z}-\blw{\blw{q\suba{{ \blw{x}\blw{y}}}}z}\Big)\\
=~& \blw{q\suba{{\blw{\blw{x}y}}}\blw{z}} +\lambda{q\suba{{\blw{\blw{x}y}}}z}- \blw{q\suba{{\blw{\blw{x}y}}}}\blw{z}-\blw{\blw{q\suba{{\blw{x\blw{y}}}}}z}-\blw{\blw{q\suba{{\lambda{xy}}}}z}+\blw{\blw{q\suba{{ \blw{x}\blw{y}}}}z}\\
\equiv~& \blw{q\suba{-{\blw{x\blw{y}} -\lambda{xy}+ \blw{x}\blw{y}}}\blw{z}}+\lambda{q\suba{{- \blw{x\blw{y}} -\lambda{xy}+ \blw{x}\blw{y}}}z}+ \blw{q\suba{{ \blw{x\blw{y}} +\lambda{xy}- \blw{x}\blw{y}}}}\blw{z}\\
&+\blw{q\suba{{\blw{x\blw{y}}}}\blw{z}}+\lambda\blw{q\suba{{\blw{x\blw{y}}}}z}-\blw{q\suba{{\blw{x\blw{y}}}}}\blw{z}
+\blw{q\suba{{\lambda{xy}}}\blw{z}}+\lambda\blw{q\suba{{\lambda{xy}}}z}-\blw{q\suba{{\lambda{xy}}}}\blw{z}\\
&-\blw{q\suba{{ \blw{x}\blw{y}}}\blw{z}}-\lambda\blw{q\suba{{ \blw{x}\blw{y}}}z}+\blw{q\suba{{ \blw{x}\blw{y}}}}\blw{z}\\
\equiv~& 0 \mod(S_{\rm mrbl},w_1).
\end{align*}

The ambiguity $w_2$ is from the including composition $$\Big(f(x,q\suba{{\blw{\blw{y}z}}}), f(y,z)\Big)_{w_2},$$
which is trivial by
\begin{align*}
&\Big(f(x,q\suba{{\blw{\blw{y}z}}}), f(y,z)\Big)_{w_2}\\
=~&\Big(\blw{\blw{x}q\suba{{\blw{\blw{y}z}}}} + \blw{x\blw{q\suba{{\blw{\blw{y}z}}}}} +\lambda {xq\suba{{\blw{\blw{y}z}}}}- \blw{x}\blw{q\suba{{\blw{\blw{y}z}}}}\Big)\\
&-\Big(\blw{\blw{x}q\suba{{\blw{\blw{y}z} + \blw{y\blw{z}} +\lambda {yz}- \blw{y}\blw{z}}}} \Big)\\
=~& \blw{x\blw{q\suba{{\blw{\blw{y}z}}}}} +\lambda {xq\suba{{\blw{\blw{y}z}}}}- \blw{x}\blw{q\suba{{\blw{\blw{y}z}}}}
-\blw{\blw{x}q\suba{{  \blw{y\blw{z}}}}} - \blw{\blw{x}q\suba{\lambda {yz}}} + \blw{\blw{x}q\suba{\blw{y}\blw{z}}} \\
\equiv~& \blw{x\blw{q\suba{{- \blw{y\blw{z}} -\lambda {yz}+ \blw{y}\blw{z}}}}} +\lambda {xq\suba{{- \blw{y\blw{z}} -\lambda {yz}+ \blw{y}\blw{z}}}}- \blw{x}\blw{q\suba{{- \blw{y\blw{z}} -\lambda {yz}+ \blw{y}\blw{z}}}}\\
&+ \blw{x\blw{q\suba{{  \blw{y\blw{z}}}}}} +\lambda {xq\suba{{  \blw{y\blw{z}}}}}- \blw{x}\blw{q\suba{{  \blw{y\blw{z}}}}}
+ \blw{x\blw{q\suba{\lambda {yz}}}} + \lambda {xq\suba{\lambda {yz}}} - \blw{x}\blw{q\suba{\lambda {yz}}} \\
& - \blw{x\blw{q\suba{\blw{y}\blw{z}}}} - \lambda {xq\suba{\blw{y}\blw{z}}} + \blw{x}\blw{q\suba{\blw{y}\blw{z}}}\\
\equiv~& 0 \mod(S_{\rm rbl},w_2).
\end{align*}
Thus $S_{{\rm mrbl}}$ is a Gr\"{o}bner-Shirshov basis in $\bfk\maps{X}$.

\mref{it:rbr2} It follows from Lemma~\mref{lem:cdlnu} and Item~\mref{it:mrbr1}.
\end{proof}

\begin{theorem} Let $(X, \leq)$ be a well-ordered set and $\dbr$ the monomial order on $\maps{X}$.
\begin{enumerate}
\item The set
$$ S_{{\rm mrbr}} := \Big\{\blw{x\blw{y}} +\blw{\blw{x}y} + \lambda {xy}- \blw{x}\blw{y}\mid x,y\in \maps{X}\Big\} $$
is a Gr\"{o}bner-Shirshov basis in $\bfk\maps{X}$. \mlabel{it:mrbl1}

\item The set $${\rm Irr}(S_{{\rm mrbr}}) := \Big\{\frakS(X) \setminus \{q|_{\blw{x\blw{y}}} \mid x,y\in \frakS(X), q \in \frakS^\star(X)\} \Big\}$$ is a \bfk-basis of the
free nonunitary modified Rota-Baxter algebra $\bfk\frakS(X)/\Id(S_{{\rm mrbr}})$ on $X$. \mlabel{it:mrbl2}
\end{enumerate}
\mlabel{thm:mrbgs2}
\end{theorem}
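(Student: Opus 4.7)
The plan is to mirror the proof of Theorem~\mref{thm:mrbgs}, interchanging the roles of the left and right bracketings dictated by the switch from $\dbl$ to $\dbr$. Writing $g(x,y) := \blw{x\blw{y}} +\blw{\blw{x}y} + \lambda xy- \blw{x}\blw{y}$, the first step is to verify, using the definition of $\dbr$ in Eq.~(\mref{eq:ordr}) and the path computations already performed in Section~3 (namely that among $\blw{x\blw{y}}, \blw{\blw{x}y}, \blw{x}\blw{y}, \blw{xy}$ of common $\degx$, the monomial $\blw{x\blw{y}}$ is the $\dbr$-largest because $P\mu P\,\patr(y) >_{\rm dlex} \mu P\,\patr(y) >_{\rm dlex} P\mu\,\patr(y)$), that $\lbar{g(x,y)} = \blw{x\blw{y}}$ for all $x,y \in \maps{X}$.

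Next, since $|\lbar{g(x,y)}| = 1$, there are no intersection compositions to consider, and we only need to inspect including compositions. The ambiguities are
\[
w_1 = \blw{q|_{\blw{x\blw{y}}}\blw{z}} \quad\text{and}\quad w_2 = \blw{\blw{x}q|_{\blw{y\blw{z}}}},
\]
arising respectively from $\bigl(g(q|_{\blw{x\blw{y}}},z), g(x,y)\bigr)_{w_1}$ and $\bigl(g(x,q|_{\blw{y\blw{z}}}), g(y,z)\bigr)_{w_2}$. The triviality of each is verified by the same kind of straightforward expansion-and-substitution calculation carried out in Theorem~\mref{thm:mrbgs}: one uses $g(a,b) \equiv 0$ inside each bracket on the relevant leading term, then cancels the remaining terms by applying $g$ once more where possible, until everything reduces to zero modulo $(S_{\rm mrbr}, w_i)$ with all intermediate monomials strictly smaller than $w_i$. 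This establishes part~\mref{it:mrbl1}.

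For part~\mref{it:mrbl2}, we invoke the nonunitary Composition-Diamond lemma (Lemma~\mref{lem:cdlnu}). Since $S_{\rm mrbr}$ is a \gsb and since the quotient $\bfk\frakS(X)/\Id(S_{\rm mrbr})$ is, by construction of the modified Rota-Baxter identity, a realization of the free nonunitary modified Rota-Baxter algebra on $X$, the lemma identifies $\Irr(S_{\rm mrbr})$ with a $\bfk$-basis of this free object.

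The main obstacle is not conceptual but bookkeeping: the composition for $w_1$ produces many terms (six from expanding $g$ on the outer position, plus three from substituting the inner $g$ inside $q|_{-}$ at each of the three possible outputs of the inner relation), and one must carefully verify that, after grouping, the $\lambda xy$-type terms, the $\blw{xy}$-type terms, and the $\blw{x}\blw{y}$-type terms each cancel independently. This is precisely the same organization as in the proof of Theorem~\mref{thm:mrbgs}, transposed via the left-right symmetry of the two ambiguities, so the calculation goes through without surprise.
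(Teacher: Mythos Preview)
Your approach is correct and coincides with the paper's proof, which simply says the result holds in analogy to Theorem~\mref{thm:mrbgs}. One slip to fix: since $\lbar{g(x,y)} = \blw{x\blw{y}}$, the second ambiguity must be $w_2 = \blw{x\blw{q|_{\blw{y\blw{z}}}}}$ (the leading monomial of the outer element $g(x, q|_{\blw{y\blw{z}}})$ that you correctly name), not $\blw{\blw{x}q|_{\blw{y\blw{z}}}}$ as you wrote; and in the modified case the fourth monomial to compare is $xy$ rather than $\blw{xy}$, though this does not change the conclusion about the leading term.
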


\begin{proof}
In analogy to the proof of Theorem~\mref{thm:mrbgs}, the result holds.
\end{proof}

\smallskip

\noindent
{\bf Acknowledgements}:
This work was supported by the National Natural Science Foundation
of China (Grant No.12071191), the Natural Science Foundation of Gansu Province (Grant No. 20JR5RA249).

\end{document}